\theoremstyle{plain}
\newtheorem{corollary}{Corollary}[section]
\newtheorem{lemma}{Lemma}[section]
\newtheorem{proposition}{Proposition}[section]
\theoremstyle{definition}
\newtheorem{definition}{Definition}[section]
\theoremstyle{remark}
\newtheorem{remark}{Remark}[section]
\newcommand{\C}{\mathbb C}
\newcommand{\Z}{\mathbb Z}
\newcommand{\N}{\mathbb N}
\newcommand{\half}{
        {\lower0.00ex\hbox{\raise.6ex\hbox{\the\scriptfont0 1}
                           \kern-.5em\slash\kern-.1em\lower.45ex
                                     \hbox{\the\scriptfont0 2}}}}
\newcommand{\quarter}{
        {\lower0.00ex\hbox{\raise.6ex\hbox{\the\scriptfont0 1}
                           \kern-.5em\slash\kern-.1em\lower.45ex
                                     \hbox{\the\scriptfont0 4}}}}
\newcommand{\tquarter}{
        {\lower0.00ex\hbox{\raise.6ex\hbox{\the\scriptfont0 3}
                           \kern-.5em\slash\kern-.1em\lower.45ex
                                     \hbox{\the\scriptfont0 4}}}}
\newcommand{\eighth}{
        {\lower0.00ex\hbox{\raise.6ex\hbox{\the\scriptfont0 1}
                           \kern-.5em\slash\kern-.1em\lower.45ex
                                     \hbox{\the\scriptfont0 8}}}}
\newcommand{\othird}{
        {\lower0.00ex\hbox{\raise.6ex\hbox{\the\scriptfont0 1}
                           \kern-.5em\slash\kern-.1em\lower.45ex
                                     \hbox{\the\scriptfont0 3}}}}
\begin{document}

\title[]{An Infinite Product of the Incomplete Beta Function-type Hypergeometric Function and its Probabilistic Origins}

\author{N. S.~Witte}
\address{School of Mathematics and Statistics, Victoria University of Wellington, PO Box 600 Wellington 6140, New Zealand}
\email{\tt n.s.witte@protonmail.com}

\begin{abstract}
Recently it has been shown that the $\alpha$-Sun density $h(x)$ [{\it J. Math. Anal. Appl.}, {\bf 527} (2023), p. 127371]
which interpolates between the Fr{\'e}chet density and that of the positive, stable distributions whose density is given by a Fox $H$-function,
has a Mellin transform involving an infinite product of ratios of Incomplete Beta functions.
We develop systematic, but asymptotic, approximations for such products and consequently for the behaviour of the density as $ x\to 0+$ which
complement the recent exact form for this by Simon [{\it Electron. Commun. Probab.}, {\bf 28} (2023) p. 1 - 13].
The systematic expansion is an example of a Power Product Expansion, 
and in our case we derive bounds and estimates which show that this expansion is not convergent and thus only yields an asymptotic expansion.
\end{abstract}

\subjclass[2010]{60E07 ; 60G70 ; 30A80 ; 33.20 ; 33A35}
\maketitle

\section{The $\alpha$-Sun density}\label{Introduction}
\setcounter{equation}{0}
In \cite{WG_2023} the analytic properties of a density function 
$ h(x;\alpha,\gamma) $, $ x \in (0,\infty) $, $ \gamma > 0 $, $ 0 < \alpha < 1 $
which arises from the domain of attraction problem for a statistic interpolating between the supremum and sum of random variables were investigated.
The parameter $ \alpha $ controls the interpolation between these two cases, 
while $ \gamma $ parametrises the type of extreme value distribution from which the underlying random variables are drawn.
For $ \alpha = 0 $, $ \gamma>0 $ the density reduces to the Fr{\'e}chet density, 
whereas for $ \alpha = 1 $, $ 0< \gamma < 1 $ it is a particular Fox $H$-function appropriate to positive, stable distributions with index $\gamma$.
However for intermediate $ \alpha $ an entirely new function appears, 
which is not one of the extensions to the hypergeometric function considered to date.

In their study of this model - the $\alpha$-Sun process - Greenwood and Hooghiemstra \cite[Theorem 2, Eq. (2.4)]{GH_1991}
have shown this density $h(x;\alpha,\gamma)$ satisfies a linear, homogeneous integral equation 
\begin{equation}
  h(x) = \frac{\gamma}{x} \int_{0}^{x}du\; \frac{h(u)}{(x-\alpha u)^{\gamma}} ,
\label{integral_eqn}
\end{equation}
subject to the conditions: $ \alpha \in (0,1) $, $ \gamma \in (0,\infty) $ identifies the domain of attraction as above, 
and $ h(x) $ is a real, normalised probability density on $ x \in (0,\infty) $.
It was observed that the $\alpha$-Sun process, 
for all $\alpha$ in $(0,1)$, behaves in a way that is much more like the supremum of the input variables than like the sum. 
Furthermore there is non-trivial limiting behaviour as $\alpha \to 1^{-}$.

The central result of \cite{WG_2023} gives a Mellin-Barnes integral representation for $h(x)$, Eq.(3.41) in Prop. 3.9, 
which is a standard way of defining many special functions, such as the Meijer-$G$ and Fox-$H$ functions.
Under the conditions $ 0 \leq \alpha < 1 $, $ \gamma>0 $, $ 0<x<\infty $ the density $ h(x;\alpha,\gamma) $ has the representation for any $ c < 1 $,
\begin{equation}
	h(x) = \frac{\gamma}{2\pi ix}\int_{c-i\infty}^{c+i\infty} dt\; x^{-\gamma t}\Gamma(1-t)(1-\alpha)^{-\gamma t}
	\prod_{j=1}^{\infty}\frac{{}_2F_1(\gamma,(j-t)\gamma;1+(j-t)\gamma;\alpha)}{{}_2F_1(\gamma,j\gamma;1+j\gamma;\alpha)} ,
\label{MBdensity}
\end{equation}
where $ {}_2F_1(a,b;c;z) $ is the Gauss hypergeometric function \cite[\S 15]{DLMF}. 
In the general case the Mellin transform of the density $ h(x) $ is defined by
\begin{equation}
    H(s) := \int_{0}^{\infty} dx\; x^{s-1}h(x) ,
\label{MT_defn}
\end{equation}
for the vertical strip in the complex $s$-plane, $1-\gamma < {\rm Re}(s) < 1+\gamma$.
Prop. 3.1 of \cite{WG_2023} states that for $ \Re(s) < 1+\gamma $ the Mellin transform $ H(s;\alpha,\gamma) $ satisfies the linear, homogeneous functional equation
\begin{equation}
	H(s) = \frac{\gamma}{1+\gamma-s} {}_2F_1(\gamma,1+\gamma-s;2+\gamma-s;\alpha) H(s-\gamma) .
\label{MT_functional}
\end{equation}

Firstly with regard to the analytical character of ${}_2F_1(\gamma,1+\gamma-s;2+\gamma-s;\alpha)$ with respect to $ s $ we can make the following immediate observations:
For $ 0 < |\alpha| < 1 $ the ${}_2F_1$ in \eqref{MT_functional} is finite for all $ \Re(s) < 1+\gamma $ 
whereas it has simple poles on the right-hand side of the complex $s$-plane at $ s=1+\gamma+k $ for $ k \in \Z_{\geq 0} $.
For $ |\alpha| = 1 $ we require in addition that $ \Re(\gamma) < 1 $ and the poles are the same as for $ |\alpha| < 1 $.
This ${}_2F_1$ function is one of a pair of solutions to the hypergeometric differential equation about $ \alpha=0 $ that is bounded as $ s \to -\infty $ (by $ (1-\alpha)^{-\gamma} $). 
This distinguishes itself from the other member of the pair which has the simple algebraic form $ \alpha^{s-\gamma-1} $, and which diverges as $ s \to -\infty $.
This hypergeometric function can be linearly mapped into other forms via the Kummer or linear fractional transformations of the $ \alpha $-plane
and we record one version \cite[15.8.1]{DLMF} of these:
\begin{equation*}
   {}_2F_1(\gamma,1+\gamma-s;2+\gamma-s;\alpha) 
   = (1-\alpha)^{-\gamma} {}_2F_1(\gamma,1;2+\gamma-s;\tfrac{\alpha}{\alpha-1}) .
\end{equation*}
A useful alternative integral representation of the $ {}_2F_1 $ is
\begin{equation}
   {}_2F_1(\gamma,1;2+\gamma-s;\tfrac{\alpha}{\alpha-1}) = 1-\alpha\gamma(1-\alpha)^{\gamma} \int_{0}^{1} dt\; t^{1+\gamma-s}(1-\alpha t)^{-\gamma-1} .
\label{F_integral}
\end{equation}
The hypergeometric function can be identified in a number of ways.
In one instance it is an incomplete Beta function, see \cite[8.17.7]{DLMF},
\begin{equation*}
   {}_2F_1(\gamma,1+\gamma-s;2+\gamma-s;\alpha) = (1+\gamma-s)\alpha^{s-\gamma-1}B_{\alpha}(1+\gamma-s,1-\gamma) . 
\end{equation*}
In particular a sequence of these $ {}_2F_1 $ will feature prominently.
\begin{definition}\label{F-defn}
Let $ F_{j} := {}_2F_1(\gamma,j\gamma;1+j\gamma;\alpha) $ for $ j \in \N_{0} $. 
The series expansion in $\alpha$ also generates a partial fraction form with respect to $j$ which will be useful in the sequel
\begin{equation}
	F_{j} = \sum_{l=0}^{\infty} \alpha^{l}\frac{(\gamma)_{l}}{l!}\frac{j}{j+l\gamma^{-1}} .
\label{F-ParFrac}
\end{equation}
\end{definition}
Some basic properties of this sequence are the following. Let $ 0 < \alpha < 1 $ and $ \gamma > 0 $.
The hypergeometric factors $ F_j $ satisfy $ F_{0} = 1 $, are monotonically increasing, $ F_{j+1} > F_{j} $, and are bounded above, $ F_{j} < (1-\alpha)^{-\gamma} $.

Simon has recently given the asymptotic form of the density, \cite[Thm. 2]{Sim_2023}, as $ x\to 0^{+} $,
\begin{equation}
	h(x;\alpha,\gamma) \mathop{\sim}\limits_{x \to 0^+} c(\alpha,\gamma) x^{-1-\gamma(1-\alpha)^{-1}} e^{-(1-\alpha)^{-\gamma}x^{-\gamma}} ,
\label{h_x2zero}
\end{equation}
where the constant factor is
\begin{equation}
	c = \gamma(1-\alpha )^{\frac{\gamma}{\alpha-1}}\exp\left(\frac{\alpha\psi(1)}{\alpha-1}\right)
		\prod^{\infty}_{k=1}\frac{\exp \left(\frac{\alpha}{(\alpha-1)k}\right)}{{}_2F_1\left(1,\gamma ;1+k\gamma;\frac{\alpha}{\alpha-1}\right)} .
\label{constant}
\end{equation}

The modus operandi of our study is to develop alternative representations for the infinite products of hypergeometric functions, 
ideally as identities or alternatively as approximations in any sense, 
especially those which transparently exhibit the behaviour of the density as the parameters $ \alpha \to 1^{-} $ and for all $\gamma$.
These are the regions where the magnitudes of the products grow or shrink the fastest, and have an interesting interpretation for the underlying probability models.
The infinite products are very slowly converging and are completely opaque regarding even the gross behaviour of the product in these parameter regions.
Simon's constant in the regime $ x\to 0^{+} $ is no more helpful in this regard, although it is a significant step to know this result.
In keeping with this aim we develop a product formula for the $F_j$ factors themselves using a recursive formula in \S \ref{ProductAnsatz}.
In the following section \S \ref{PPE} we situate our product formula within a class of generating functions known as power product expansions and generalised Lambert series,
recalling a number of known results for recursive and convolution identities for relations between these objects in a general setting.
Using these relations we derive bounds and estimates in \S \ref{Bounds+Growth} for our particular application, 
and demonstrate the asymptotic character of our product formula due to the super-exponential growth of their coefficients.
Our final section \S \ref{ProductsofGamma} employs our product formula for the $ F_j $ in the product over $j$, 
switches the order and employs an identity for products of rational roots-of-unity factors in terms of Gamma functions to arrive at an asymptotic approximation for the density,
given in Prop. \ref{density_approx}. 
Some simple Gamma function product formulae are given for the first two successive approximations to the constant factor in the \eqref{1stApproxDensity} and \eqref{2ndApproxDensity},
and these are numerically compared to Simon's exact formula.

\section{The Product Formula}\label{ProductAnsatz}
\setcounter{equation}{0}

From Prop. 3.5 of \cite{WG_2023} the consecutive product of hypergeometric functions $F_{k}$ has the leading order behaviour as $ N \to \infty $,
\begin{equation}
   \prod_{k=1}^{N} {}_2F_1(\gamma,k\gamma;1+k\gamma;\alpha) \mathop{\sim}\limits_{N \to \infty} 
   C (1-\alpha)^{-N\gamma}\left[ N+1+\frac{1}{\gamma} \right]^{-\alpha/(1-\alpha)} ,
\label{2F1-product_asymptotic:b}
\end{equation}
where $ C $ is a constant independent of $ N $.
Another useful function turns out to be
\begin{equation}
   G(t) := \frac{1}{\Gamma\left(\dfrac{1+\gamma-s}{\gamma}\right)}H(s) ,\quad t := \frac{1-s}{\gamma} ,
\label{G_defn}
\end{equation}
which eliminates the simple poles at $ s = 1+k\gamma $, $ k \in \N $, so that in the finite-$t$ plane
$ |G(t)| \leq (1-\alpha)^{-\gamma} \left[ \cosh(\pi\Im(t)) \right]^{1/2} $.
At the prescribed values $ s=1-k\gamma $, $ k \in \Z_{\geq 0} $, we deduce 
\begin{equation}
   G_{k} := G(t=k) = \frac{1}{\prod_{l=1}^{k} {}_2F_1(\gamma,l\gamma;1+l\gamma;\alpha)} ,
\label{G-values}
\end{equation}
which have the growth properties given above.

Furthermore the finite product $ G_k $, defined by \eqref{G-values}, 
was shown to be \cite[Lemma 3.3]{WG_2023} a function of complex $k$ analytic in the half-plane $ \Re(k) > 1+\gamma^{-1} $ in an infinite product form,
\begin{equation}
   G_{k} = (1-\alpha)^{\gamma k} \prod_{j=1}^{\infty} \frac{F_{j+k}}{F_{j}} ,
\label{Gk}
\end{equation}
and has an interpolating function 
\begin{equation}
   G(-t) = (1-\alpha)^{-\gamma t} \prod_{j=1}^{\infty} \frac{F_{j-t}}{F_{j}} .
\label{Ginterpolate}
\end{equation}

Our new result is really a large-$j$ expansion for $ F_{j}$ but not in the usual asymptotic sense of Poincar\'e with a sum of descending monomials 
but rather in the sense of a Pad\'e approximant with only denominator factors.
\begin{proposition}\label{ProductFormula}
Let $ 0\leq \alpha < 1 $, $ \gamma > 0 $.
There is a sequence $\{ f_{m} (\alpha, \gamma) \}_{m=1}^{\infty} $ such that for each $m \in \N$,
\begin{multline}
	(1-\alpha)^{\gamma} \left( 1+\frac{1}{j}f_{1} \right)\left( 1+\frac{1}{j^2}f_{2} \right) \cdots
	\left( 1+\frac{1}{j^m}f_{m} \right)F_{j} 
\\
	= 1 + {\rm O}(j^{-m-1}), \quad \text{\rm as $j\to \infty$} .
\label{large-j_Fj}
\end{multline}
\end{proposition}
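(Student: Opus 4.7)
\emph{Plan.} The proof naturally splits into an analytic step---establishing a Poincar\'e asymptotic expansion of $(1-\alpha)^\gamma F_j$ in inverse powers of $j$---and an algebraic step---recursively unpacking that expansion into the product form \eqref{large-j_Fj}.

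I would begin from the partial fraction representation \eqref{F-ParFrac}, rewritten as
\[
F_j = \sum_{l=0}^\infty \alpha^l \frac{(\gamma)_l}{l!}\cdot\frac{1}{1+l/(j\gamma)}.
\]
For fixed $m\in\N$, expand the inner factor by the truncated geometric identity $(1+r)^{-1} = \sum_{n=0}^m(-r)^n + (-r)^{m+1}/(1+r)$ with $r=l/(j\gamma)$, substitute, and interchange the order of summation over $l$ and $n$. Absolute convergence is immediate because for $0\leq\alpha<1$ and $\gamma>0$ each moment
\[
M_n(\alpha,\gamma) := \sum_{l=0}^\infty l^n \alpha^l \frac{(\gamma)_l}{l!} = (\alpha\partial_\alpha)^n(1-\alpha)^{-\gamma}
\]
is finite. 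The resulting remainder is bounded by $\gamma^{-(m+1)}M_{m+1}(\alpha,\gamma)\,j^{-(m+1)}$, using $0< j/(j+l/\gamma)\leq 1$. Multiplying through by $(1-\alpha)^\gamma$ and noting $(1-\alpha)^\gamma M_0 = 1$ yields
\[
(1-\alpha)^\gamma F_j = 1 + \sum_{n=1}^m c_n\, j^{-n} + O(j^{-m-1}), \qquad c_n = (-\gamma)^{-n}(1-\alpha)^\gamma M_n,
\]
with $c_n$ depending only on $\alpha$ and $\gamma$.

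The $f_m$ are then extracted recursively. Set $A(j):=(1-\alpha)^\gamma F_j$ and $A_0:=A$. Suppose inductively that $f_1,\ldots,f_m$ have been chosen so that
\[
A_m(j) := \prod_{k=1}^m\Bigl(1+\frac{f_k}{j^k}\Bigr)A(j) = 1 + d_{m+1}\,j^{-m-1} + O(j^{-m-2}).
\]
Closure of Poincar\'e asymptotic series under multiplication ensures that $A_m$ has such an expansion, with $d_{m+1}$ a polynomial in $f_1,\ldots,f_m$ and $c_1,\ldots,c_{m+1}$ and hence a function of $(\alpha,\gamma)$ alone. Setting $f_{m+1} := -d_{m+1}$ then gives $A_{m+1}(j) = 1 + O(j^{-m-2})$, closing the induction and establishing \eqref{large-j_Fj}.

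The only genuinely analytic input is the remainder bound in the asymptotic expansion, and that is routine given the finiteness of the moments $M_n$; everything else is algebra over formal power series in $j^{-1}$. The substantive question that Prop. \ref{ProductFormula} deliberately sidesteps---whether the resulting formal product has any convergence properties in $m$---is addressed in \S\ref{Bounds+Growth}, where the $f_m$ are shown to grow super-exponentially, so that the expansion is asymptotic only and not convergent in the classical sense.
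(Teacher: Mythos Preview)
Your proof is correct and follows essentially the same approach as the paper: both first observe that $(1-\alpha)^\gamma F_j$ admits a Poincar\'e expansion in inverse powers of $j$ (the paper appeals to \eqref{F-ParFrac} for this, while you make the remainder bound explicit via the moments $M_n$), and then both run the identical induction of multiplying in the next factor $(1+f_{m+1}j^{-m-1})$ and choosing $f_{m+1}$ to kill the leading correction. Your treatment of the analytic step is somewhat more careful than the paper's, but the argument is the same.
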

\begin{proof}
The values of $f_m$ for $m = 1, 2, 3, . . .$ can be generated by hand or otherwise by expanding successive products with $ F_{j} $ as a series expansion in large $j$,
see \eqref{F-ParFrac}.
For example to begin with one observes that the leading term $ {\rm O}(j^{-1}) $ is
\begin{equation*}
	\log\left[ (1-\alpha )^{\gamma }{}_2F_1(\gamma ,j \gamma ;j \gamma +1;\alpha ) \right] 
	= -\frac{\alpha}{1-\alpha}j^{-1} + {\rm O}(j^{-2}) ,
\end{equation*} 
and that to eliminate this requires multiplication by an additional factor $ (1+\frac{\alpha}{1-\alpha}j^{-1} ) $.
Re-expanding again one observes a new leading term of order $ {\rm O}(j^{-2}) $,
\begin{equation*}
	\log\left[ (1-\alpha )^{\gamma } \left( 1+\frac{\alpha }{(1-\alpha )}j^{-1} \right) \, {}_2F_1(\gamma ,j \gamma ;j \gamma +1;\alpha ) \right]
	= \frac{\alpha }{\gamma(1-\alpha )^2}j^{-2} + {\rm O}(j^{-3}) .
\end{equation*}
And so on. We will now show that this holds in general for all $m>1$ by induction. 
Having assumed its veracity for $m$, i.e., 
\begin{multline*}
	(1-\alpha)^{\gamma} \left( 1+\frac{1}{j}f_{1} \right)\left( 1+\frac{1}{j^2}f_{2} \right) \cdots	\left( 1+\frac{1}{j^m}f_{m} \right) F_{j}
\\
	= 1 + {\rm O}(j^{-m-1}) = 1 + C_{m+1}j^{-m-1} + {\rm O}(j^{-m-2}),
\end{multline*}
where the sub-leading terms indicated above contain decreasing powers of $j$ which drop by one unit each term.
It should be observed that $C_{m+1}$ has no dependence on $j$ 
and by algebraic construction from terms arising from the expansion of the products on the left-hand side it can only have contributions where the sum of non-zero indices add up to $m+1$,
that is to say it can only have dependence on $f_{1},\ldots,f_{m}$.
This follows from the fact that $F_{j}$, by \eqref{F-ParFrac}, has an analytic expansion about $j=\infty$.
Now we compute
\begin{multline*}
	(1-\alpha)^{\gamma} \left( 1+\frac{1}{j}f_{1} \right)\left( 1+\frac{1}{j^2}f_{2} \right) \cdots	\left( 1+\frac{1}{j^m}f_{m} \right)\left( 1+\frac{1}{j^{m+1}}f_{m+1} \right) F_{j}
\\
	= 1 + \left[ f_{m+1}+C_{m+1} \right]j^{-m-1} + {\rm O}(j^{-m-2}) .
\end{multline*}
Now, as before, we choose $ f_{m+1} = -C_{m+1} $, and the leading correction is now ${\rm O}(j^{-m-2}) $ thus proving our claim.
\end{proof}

\begin{remark}
We will find subsequently in Prop.~\ref{fGrowth} that the product \eqref{large-j_Fj} does not converge as $m\to \infty$ because the growth of $ f_{m} $ is super-exponential 
(see \eqref{f2tratio}) so the representation is asymptotic.
There is a parallel expansion for large $ \gamma $ but in each factor both the numerator and denominator contain $j$ and this has disadvantages for our subsequent applications.
\end{remark}

\begin{remark}
The combination $ (1-\alpha)^{m}f_{m} $ is a polynomial in $\alpha$ whose leading and trailing coefficients in respect of $ \alpha $ are given by
\begin{multline}
	(1-\alpha)^{m}\gamma^{m-1}f_{m} = 
	(-1)^{m-1} \alpha \Big[ \left( (1+\gamma)^{m-1}-\gamma^{m-1} \right)\alpha^{m-2} + \ldots
\\
		 + \left( \tfrac{1}{2}(2^{m}-2m)+\tfrac{1}{4}(2^{m+1}-2m-3-(-1)^{m})\gamma \right)\alpha + 1 \Big] .
\end{multline}
The list of the first six $f_{n}$ are:
\begin{gather}
	(1-\alpha)f_1 = \alpha ,
\label{f1}
\\
	(1-\alpha)^2 \gamma f_2 = -\alpha ,
\label{f2}
\\
	(1-\alpha)^3 \gamma^2 f_3 = \alpha  [ \alpha  (2 \gamma +1)+1 ] ,
\label{f3}
\\
	(1-\alpha)^4 \gamma^3f_4 = -\alpha  \left[ \alpha ^2 \left(3 \gamma^2+3 \gamma +1\right)+\alpha  (5 \gamma +4)+1 \right] ,
\label{f4}
\end{gather}
\begin{multline}
	(1-\alpha )^5 \gamma^4 f_5 = 
\\
	\alpha
	\left[ \alpha ^3 \left(4 \gamma^3+6 \gamma^2+4 \gamma +1\right)+\alpha ^2 \left(16 \gamma^2+25 \gamma +11\right)+\alpha  (13 \gamma +11)+1 \right] ,
\label{f5}
\end{multline}
and
\begin{multline}
	(1-\alpha)^6 \gamma^5 f_6 =
\\
	 -\alpha  
	\left[
		\alpha ^4 \left(5 \gamma^4+10 \gamma^3+10 \gamma^2+5 \gamma +1\right)+\alpha ^3 \left(33 \gamma^3+83 \gamma^2+78 \gamma +26\right)
	\right.
\\		
	\left.
		+\alpha ^2 \left(65 \gamma^2+129 \gamma +66\right)+2 \alpha  (14 \gamma +13)+1
	\right] .
\label{f6}
\end{multline}
\end{remark}


Prior to continuing we require a preliminary identity for an infinite sum to be recast as a finite sum. 
\begin{lemma}\label{PowerSum2StirlingSum}
Let $ m\in \Z_{\geq 0} $, $ 0<\alpha<1 $ and $ \gamma>0 $. Then the following identity holds:
\begin{equation}
	\sum_{k=0}^{\infty}\alpha^{k}\frac{(\gamma)_{k}}{k!}k^{m} = (1-\alpha)^{-\gamma}\sum_{l=0}^{m}S(m,l)(\gamma)_{l}\left( \frac{\alpha}{1-\alpha} \right)^{l} .
\label{PowerSum}
\end{equation}
where $ S(m,i) = \left\{ {m\atop i} \right\} $ is the Stirling number of the second kind or the Stirling "partition" number in the notation of \cite[\S 26.8(i)]{DLMF},
i.e. the number of partitions of ${1,2,\ldots,m}$ into exactly $i$ non-empty subsets.
\end{lemma}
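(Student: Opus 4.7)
The plan is to reduce the $k^{m}$ weight to a sum of falling factorials using the combinatorial identity for Stirling numbers of the second kind, and then to collapse each of the resulting Pochhammer-weighted power series using Newton's binomial theorem.

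\smallskip

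\textbf{Step 1 (rewrite $k^m$).} I recall the defining identity for the Stirling partition numbers,
\begin{equation*}
k^{m} = \sum_{l=0}^{m} S(m,l)\, k^{\underline{l}}, \qquad k^{\underline{l}} := k(k-1)(k-2)\cdots(k-l+1),
\end{equation*}
with the convention $k^{\underline{0}}=1$. Substituting this into the left-hand side of \eqref{PowerSum} and exchanging the order of summation gives
\begin{equation*}
\sum_{k=0}^{\infty}\alpha^{k}\frac{(\gamma)_{k}}{k!}k^{m}
= \sum_{l=0}^{m} S(m,l) \sum_{k=0}^{\infty}\alpha^{k}\frac{(\gamma)_{k}}{k!}\, k^{\underline{l}} .
\end{equation*}
The interchange is legitimate because the outer sum over $l$ is finite and each inner series is absolutely convergent for $0<\alpha<1$, $\gamma>0$ (it is dominated by the binomial series below).

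\smallskip

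\textbf{Step 2 (evaluate each inner sum).} Since $k^{\underline{l}}=0$ for $k<l$ and $k^{\underline{l}}/k! = 1/(k-l)!$ for $k\ge l$, shifting the index $k = n+l$ and using the Pochhammer factorisation $(\gamma)_{n+l} = (\gamma)_{l}(\gamma+l)_{n}$ yields
\begin{equation*}
\sum_{k=0}^{\infty}\alpha^{k}\frac{(\gamma)_{k}}{k!}\, k^{\underline{l}}
= (\gamma)_{l}\,\alpha^{l} \sum_{n=0}^{\infty}\frac{(\gamma+l)_{n}}{n!}\alpha^{n}
= (\gamma)_{l}\,\alpha^{l}\,(1-\alpha)^{-\gamma-l},
\end{equation*}
where the final equality is Newton's binomial series $(1-\alpha)^{-a}=\sum_{n\ge 0}(a)_{n}\alpha^{n}/n!$, applied with $a=\gamma+l>0$ (convergent for $|\alpha|<1$).

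\smallskip

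\textbf{Step 3 (assemble).} Factoring out the common $(1-\alpha)^{-\gamma}$, I obtain
\begin{equation*}
\sum_{k=0}^{\infty}\alpha^{k}\frac{(\gamma)_{k}}{k!}k^{m}
= (1-\alpha)^{-\gamma} \sum_{l=0}^{m} S(m,l)\,(\gamma)_{l}\left(\frac{\alpha}{1-\alpha}\right)^{l},
\end{equation*}
which is precisely \eqref{PowerSum}.

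\smallskip

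There is no real obstacle here; the proof is routine once one recognises the Stirling-number transform between ordinary and falling powers. The only point requiring attention is the justification of the interchange of summations in Step~1, which is immediate from absolute convergence under the stated hypotheses $0<\alpha<1$ and $\gamma>0$.
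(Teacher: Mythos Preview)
Your proof is correct and follows essentially the same approach as the paper: expand $k^m$ in falling factorials via the Stirling numbers of the second kind, interchange the finite and infinite sums, shift the index, split the Pochhammer symbol as $(\gamma)_{n+l}=(\gamma)_l(\gamma+l)_n$, and sum with the binomial series. The only cosmetic difference is notation---you write $k^{\underline{l}}$ where the paper writes $k!/(k-l)!$---and you are slightly more explicit about the justification for interchanging sums.
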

\begin{proof}
The Stirling numbers of the second kind are the matrix elements of the transformation matrix giving the monomial basis in terms of the descending factorial basis,
see \cite[\S 26.8(i)]{DLMF}, which is expressed as
\begin{equation*}
	k^m = \sum_{l=0}^{m}S(m,l)(k-l+1)_{l} = \sum_{l=0}^{m}S(m,l)\frac{k!}{(k-l)!} . 
\end{equation*}
Employing this in the left-hand side and interchanging the summations as the $k$-sum is uniformly and absolutely convergent
\begin{align*}
	\sum_{k=0}^{\infty}\alpha^{k}\frac{(\gamma)_{k}}{k!}k^{m} 
	& = \sum_{l=0}^{m} S(m,l) \sum_{k=l}^{\infty}\alpha^{k}\frac{(\gamma)_{k}}{(k-l)!} ,
\\
	& = \sum_{l=0}^{m} S(m,l) \sum_{i=0}^{\infty}\alpha^{i+l}\frac{(\gamma)_{i+l}}{i!} ,
\\
	& = \sum_{l=0}^{m} S(m,l)\alpha^{l}(\gamma)_{l} \sum_{i=0}^{\infty}\alpha^{i}\frac{(\gamma+l)_{i}}{i!} ,\quad \text{using $(\gamma)_{i+l} = (\gamma)_{l}(\gamma+l)_{i}$} ,
\\
	& = \sum_{l=0}^{m} S(m,l)\alpha^{l}(\gamma)_{l} (1-\alpha)^{-\gamma-l} ,\quad \text{by the binomial theorem} ,
\\
	& = (1-\alpha)^{-\gamma}\sum_{l=0}^{m} S(m,l)(\gamma)_{l} \left(\frac{\alpha}{1-\alpha}\right)^{l} .
\end{align*}
\end{proof}

In the proof of Prop. \ref{ProductFormula} we only indicated how successive $f_{n}$ could be determined in principle but didn't give an explicit formula - 
our next result gives an explicit recursive means to compute the coefficients $ f_{m} $.
\begin{proposition}
Let $ 0\leq \alpha < 1 $, $ \gamma > 0 $.
The coefficients $ f_{m} $, $ m\geq 1 $ satisfy the recursive formula
\begin{multline}
	f_{m+1} = (-1)^{m}\gamma^{-m-1} \sum_{i=0}^{m+1} S(m+1,i)(\gamma)_{i}\left( \frac{\alpha}{1-\alpha} \right)^{i}
\\
	+ \sum_{n=0}^{m} (-1)^{n+1}\gamma^{-n} \sum_{i=0}^{n} S(n,i)(\gamma)_{i}\left( \frac{\alpha}{1-\alpha} \right)^{i}
			\times \sum_{\substack{m \geq l_{s} > \cdots > l_{1} \geq 1 \\ \sum_{k=1}^{s}l_{k} = m+1-n}} f_{l_{s}} \cdots f_{l_{1}} .
\label{f_recur}
\end{multline}
Therefore $ (1-\alpha)^{m+1}\gamma^{m}f_{m+1} $ is a polynomial in $ \alpha, \gamma $ of degrees $m, m-1$ respectively.
The initial coefficient $ f_1 $ is given by the first term of the right-hand side at $m=0$, the second term having an empty summand.
\end{proposition}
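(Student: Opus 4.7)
The plan is to extract the coefficient of $j^{-(m+1)}$ from the identity furnished by Proposition~\ref{ProductFormula} at level $m+1$. I begin by writing out the large-$j$ expansion of $F_{j}$ itself. Starting from the partial fraction form \eqref{F-ParFrac} and expanding each factor $(1+l\gamma^{-1}/j)^{-1}$ as a geometric series in $j^{-1}$ produces
\begin{equation*}
	F_{j} \sim \sum_{n=0}^{\infty}(-\gamma)^{-n}j^{-n}\sum_{l=0}^{\infty}\alpha^{l}\frac{(\gamma)_{l}}{l!}l^{n}.
\end{equation*}
Applying Lemma~\ref{PowerSum2StirlingSum} to the inner power-sum then converts it into the Stirling form, giving an asymptotic series
\begin{equation*}
	(1-\alpha)^{\gamma} F_{j} \sim \sum_{n=0}^{\infty} a_{n} j^{-n}, \qquad a_{n} := (-1)^{n}\gamma^{-n}\sum_{i=0}^{n} S(n,i)(\gamma)_{i}\left(\frac{\alpha}{1-\alpha}\right)^{i},
\end{equation*}
with $a_{0}=1$.

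Next I would expand the finite product combinatorially,
\begin{equation*}
	\prod_{k=1}^{m+1}(1+j^{-k}f_{k}) = \sum_{N\geq 0} p_{N}\,j^{-N}, \qquad p_{N} = \sum_{\substack{l_s > \cdots > l_1 \geq 1 \\ l_1+\cdots+l_s=N}} f_{l_s}\cdots f_{l_1},
\end{equation*}
the identity being exact for $N\leq m+1$. In particular $p_{0}=1$, and separating out the single-part partition $(m+1)$ gives $p_{m+1} = f_{m+1}+\sum f_{l_s}\cdots f_{l_1}$ with the remaining sum restricted to $m\geq l_{s}>\cdots>l_{1}\geq 1$, $\sum_k l_k = m+1$. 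Invoking Proposition~\ref{ProductFormula} at level $m+1$ forces the $j^{-(m+1)}$-coefficient of the Cauchy product $(1-\alpha)^{\gamma}\prod_{k=1}^{m+1}(1+j^{-k}f_{k})F_{j}=1+{\rm O}(j^{-m-2})$ to vanish, i.e., $\sum_{n=0}^{m+1} a_{n} p_{m+1-n}=0$. Solving for $f_{m+1}$ using $a_{0}=1$, pulling out $a_{m+1}p_{0}=a_{m+1}$ as the isolated first term, and observing that for $1\leq n\leq m$ the constraint $l_{s}\leq m$ in $p_{m+1-n}$ is automatic (since $l_{s}\leq m+1-n\leq m$), yields the recursion \eqref{f_recur} after substitution of the explicit Stirling-sum form of each $a_{n}$.

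The polynomial-degree statement then follows by strong induction on $m$ applied to \eqref{f_recur}: because $S(n,0)=0$ for $n\geq 1$ the inner $i$-sum defining $a_{n}$ begins at $i=1$, so a factor of $\gamma$ may be extracted from $(\gamma)_{i}$, showing that $(1-\alpha)^{n}\gamma^{n-1}a_{n}\in\Z[\alpha,\gamma]$ of bi-degree $(n,n-1)$; combining this with the inductive bi-degrees of each factor $(1-\alpha)^{l_{k}}\gamma^{l_{k}-1}f_{l_{k}}$, the recursion delivers bi-degree $(m,m-1)$ for $(1-\alpha)^{m+1}\gamma^{m}f_{m+1}$. The main obstacle is not analytic --- only formal power-series manipulations in $j^{-1}$ are required, and for each fixed coefficient only finitely many terms contribute --- but rather notational, in matching the partition multisum on the right-hand side of \eqref{f_recur} with the Cauchy-product coefficients and correctly tracking the signs $(-1)^{n+1}$ produced by $-a_{n}$.
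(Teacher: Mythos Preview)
Your proposal is correct and follows essentially the same route as the paper: both expand $(1-\alpha)^{\gamma}F_{j}$ in inverse powers of $j$ via the partial-fraction form \eqref{F-ParFrac}, invoke Lemma~\ref{PowerSum2StirlingSum} to convert the resulting power sums into the Stirling form, expand the finite product $\prod_{k}(1+f_{k}j^{-k})$ over strict partitions, and then read off the vanishing $j^{-(m+1)}$-coefficient to isolate $f_{m+1}$. The only cosmetic difference is that the paper extracts $[j^{-m-1}]$ from the product truncated at level $m$ (so that the coefficient equals $-f_{m+1}$ directly), whereas you work at level $m+1$ and separate $f_{m+1}$ out of $p_{m+1}$; these are equivalent, and your Cauchy-product bookkeeping together with the induction for the bi-degree claim is in fact slightly more explicit than what the paper records.
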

\begin{proof}
In order to determine $ f_{m+1} $ we only need to extract the coefficient of $ j^{-m-1} $ in 
\begin{equation*}
	(1-\alpha)^{\gamma} \left( 1+\frac{1}{j}f_{1} \right)\cdots \left( 1+\frac{1}{j^m}f_{m} \right)F_{j} .
\end{equation*}
In addition we are going to split the sum for $F_{j}$ \eqref{F-ParFrac} using the following
\begin{equation*}
	\frac{j}{j+k\gamma^{-1}} = \sum_{n=0}^{m+1}(-1)^{n}\left(\frac{k}{j\gamma}\right)^{n} + (-1)^{m+2}\left(\frac{k}{j\gamma}\right)^{m+2}\frac{j}{j+k\gamma^{-1}} .
\end{equation*}
Thus we have
\begin{align*}
	-f_{m+1}
	& = [j^{-m-1}](1-\alpha)^{\gamma}\sum_{k\geq 0}\alpha^{k}\frac{(\gamma)_{k}}{k!}\sum_{n=0}^{m+1}\left( 1+\frac{1}{j}f_{1} \right) \cdots \left( 1+\frac{1}{j^m}f_{m} \right)
		(-1)^{n}\left(\frac{k}{j\gamma}\right)^{n} ,
\\
	& = [j^{-m-1}](1-\alpha)^{\gamma}\sum_{k\geq 0}\alpha^{k}\frac{(\gamma)_{k}}{k!}\sum_{n=0}^{m}\left( 1+\frac{1}{j}f_{1} \right) \cdots \left( 1+\frac{1}{j^m}f_{m} \right)
		 (-1)^{n}\left(\frac{k}{j\gamma}\right)^{n}
\\
	& \qquad
	 + (1-\alpha)^{\gamma}\sum_{k\geq 0}\alpha^{k}\frac{(\gamma)_{k}}{k!}(-1)^{m+1}\left(\frac{k}{\gamma}\right)^{m+1} ,
\\
	& = (-1)^{m+1}\gamma^{-m-1}(1-\alpha)^{\gamma}\sum_{k\geq 0}\alpha^{k}\frac{(\gamma)_{k}}{k!}k^{m+1}
\\
	& 
		+ [j^{-m-1}](1-\alpha)^{\gamma}\sum_{k\geq 0}\alpha^{k}\frac{(\gamma)_{k}}{k!}\sum_{n=0}^{m}(-1)^{n}\left(\frac{k}{\gamma}\right)^{n}j^{-n}
			\sum_{\substack{m \geq l_{s} > \cdots > l_{1} \geq 1}} j^{-l_{1}}f_{l_{1}} \cdots j^{-l_{s}}f_{l_{s}} ,
\\
	& = (-1)^{m+1}\gamma^{-m-1}(1-\alpha)^{\gamma}\sum_{k\geq 0}\alpha^{k}\frac{(\gamma)_{k}}{k!}k^{m+1}
\\
	& \qquad
		+ (1-\alpha)^{\gamma}\sum_{k\geq 0}\alpha^{k}\frac{(\gamma)_{k}}{k!}\sum_{n=0}^{m}(-1)^{n}\left(\frac{k}{\gamma}\right)^{n}
			\sum_{\substack{m \geq l_{s} > \cdots > l_{1} \geq 1 \\ \sum_{i=1}^{s}l_{i} = m+1-n}} f_{l_{1}} \cdots f_{l_{s}} .
\end{align*}
Applying identity \eqref{PowerSum} to the $k$-sums appearing above then yields \eqref{f_recur}.
\end{proof}

Let us define a $t$-sequence which contains all the $\alpha$, $\gamma$ dependencies that occur in the recursive formula \eqref{f_recur}, for $n\geq 0 $,
\begin{equation}
	t_{n}(\alpha,\gamma) := (-1)^{n}\gamma^{-n} \sum_{i=0}^{n} S(n,i)(\gamma)_{i} \left( \frac{\alpha}{1-\alpha} \right)^{i} .
\label{tDefn}
\end{equation}
We note for standard conditions, i.e. $ 0< \alpha <1 $, $ \gamma>0 $, and by the combinatorial interpretation of $ S(n,i) $ that $ t_n $ oscillates with
period $2$ in $n$ being positive for even $n$.
The first six $t_{n}$ are: $ t_{-1}=0 $, $ t_0 = 1 $,
\begin{gather}
	(1-\alpha) t_1 =  -\alpha ,
\label{t1}
\\
	(1-\alpha)^2 \gamma t_2 =  \left[ \alpha ^2 \gamma +\alpha \right] ,
\label{t2}
\\
	(1-\alpha)^3 \gamma^2 t_3 =  -\left[ \alpha ^3 \gamma ^2+\alpha ^2 (3 \gamma +1)+\alpha \right] ,
\label{t3}
\\
	(1-\alpha)^4 \gamma^3 t_4 =  \left[ \alpha ^4 \gamma ^3+\alpha ^3 \left(6 \gamma ^2+4 \gamma +1\right)+\alpha ^2 (7 \gamma +4)+\alpha \right] ,
\label{t4}
\end{gather}
\begin{multline}
	(1-\alpha)^5 \gamma^4 t_5 = 
\\
	 -\left[ \alpha ^5 \gamma ^4+\alpha ^4 \left(10 \gamma ^3+10 \gamma ^2+5 \gamma +1\right)+\alpha ^3 \left(25 \gamma ^2+30 \gamma +11\right)+\alpha ^2 (15 \gamma +11)+\alpha \right] ,
\label{t5}
\end{multline}
and
\begin{multline}
	(1-\alpha)^6 \gamma^5 t_6 =  
	\left[ 
	\alpha ^6 \gamma ^5+\alpha ^5 \left(15 \gamma ^4+20 \gamma ^3+15 \gamma ^2+6 \gamma +1\right)
	\right.
\\	\left.
	+\alpha ^4 \left(65 \gamma ^3+120 \gamma ^2+91 \gamma +26\right)+\alpha ^3 \left(90 \gamma ^2+146 \gamma +66\right)+\alpha ^2 (31 \gamma +26)+\alpha
	\right] .
\label{t6}
\end{multline}

Then the recursive formula \eqref{f_recur} takes the generic form
\begin{equation}
	f_{m+1} = -t_{m+1}
	- \sum_{n=0}^{m} t_{n} \sum_{\substack{m \geq l_{s} > \cdots > l_{1} \geq 1 \\ \sum_{k=1}^{s}l_{k} = m+1-n}} f_{l_{s}} \cdots f_{l_{1}} .
\label{f_RECUR}
\end{equation}
A unified formula can be written for\eqref{f_RECUR} by allowing for a zero index $l=0$ on the $f_l$ along with $f_0:=1$ so that we have
\begin{equation}
	\sum_{n=0}^{m+1} t_{n} \sum_{\substack{m+1 \geq l_{s} > \cdots > l_{1} \geq 0 \\ n+\sum_{k=1}^{s}l_{k} = m+1}} f_{l_{s}} \cdots f_{l_{1}} = 0 .
\label{t-f_relation}
\end{equation}
Now the $t_{m+1}$ term corresponds to $n=m+1$ and the $f_{m+1}$ term to $n=0$ in the expanded inner summation.
Consequently we have the expansion
\begin{equation}
	(1-\alpha)^{\gamma}F_{j} = \sum_{k\geq 0} j^{-k}t_{k}(\alpha,\gamma) ,
\label{AsypmtoticFj}
\end{equation}
and will defer the question of convergence or otherwise of this sum until \S \ref{Bounds+Growth}.

\section{Power Product Expansions and Lambert Series}\label{PPE}
At this point it is advantageous to step back from the specific problem at hand and recognise a larger picture -
what we have been doing is constructing a {\it power product expansion} (PPE) for a function that is given in terms of an analytic expansion about $x=0$,
i.e. in terms of an infinite product with a particular structure of its factors. 
For some background on PPEs one should consult \cite{GGM_1988}, \cite{GK_1995}, \cite{Alk_2009}, \cite{GQ_2014}.
We define this via the relation 
\begin{equation}
	A(x) := 1+\sum_{n\geq 1}a_{n}x^{n} = \prod_{n\geq 1} \left( 1+f_{n}x^{n} \right) ,
\label{AGF}
\end{equation}
where the left-hand side encodes a sequence $\{a_{n}\}_{n\geq 1}$ and the right-hand side another sequence $\{f_{n}\}_{n\geq 1}$.
These sums and products may be convergent or be purely formal as generating functions.
On the right-hand side we see an obvious combinatorial interpretation as partitions without repetitions, or strict partitions.
In the traditional combinatorial approach there is a "simple" expression for the $a$-coefficient, so that for $m\geq 1$ one has
\begin{equation}
	a_{m}  = \sum_{\substack{m \geq l_{s} > \cdots > l_{1} \geq 1 \\ \sum_{k=1}^{s}l_{k} = m}} f_{l_{s}} \cdots f_{l_{1}} .
\label{f-to-a:b}
\end{equation}
The first six $a_{n}$ in terms of $f_{n}$ are:
\begin{gather}
	a_{1} = f_{1} ,
\label{a1}
\\
	a_{2} = f_{2} ,
\label{a2}
\\
	a_{3} = f_3 + f_2 f_1 ,
\label{a3}
\\
	a_{4} = f_4 + f_3 f_1 ,
\label{a4}
\\
	a_{5} = f_5 + f_4 f_1 + f_3 f_2 ,
\label{a5}
\end{gather}
and
\begin{equation}
	a_{6} = f_6 + f_5 f_1 + f_4 f_2 + f_3 f_2 f_1 .
\label{a6}
\end{equation}

However, in contrast to many studies of additive combinatorics, 
we are not going to study examples where the $f_{n}$ are given with simple forms and the problem is to determine the consequent properties of the $a_{n}$, 
which have the combinatorial interpretation, but to do the reverse. 
Thus we will take the $a_{n}$ as given and wish to construct the $f_{n}$ in terms of the former or study their properties.
In this context the $a_{n}$ are indeterminates, whether or not they have any explicit form.
It appears that there is not an equivalent "simple" expression for the inverse situation.
Inverting the relation \eqref{f-to-a:b} for the first members gives
\begin{gather}
	f_1 = a_1 ,
\label{f1<-a}
\\
	f_2 = a_2 ,
\label{f2<-a}
\\
	f_3 = a_3 - a_2 a_1 ,
\label{f3<-a}
\\
	f_4 =   a_4 - a_3 a_1 + a_2 a_1^2 ,
\label{f4<-a}
\\
	f_5 =  a_5 - a_4 a_1 - a_3 a_2 + a_3 a_1^2 + a_2^2 a_1 - a_1^3 a_2 ,
\label{f5<-a}
\end{gather}
and
\begin{equation}
	f_6 = a_2 a_1^4 - a_3 a_1^3 - a_2^2 a_1^2 + a_4 a_1^2 + a_2 a_3 a_1 - a_5 a_1 - a_2 a_4 + a_6 .
\label{f6<-a}
\end{equation}
See Comtet \cite{Comtet_1974}, Chap. II, Supp. \& Ex. 16, pg. 120-1; Eq. (10) of \cite{GGM_1988}; pg. 1221 of \cite{GK_1995}; pg. 94-5 of \cite{GQ_2020}.
A related thread closely connected to this problem is the study of generalised Lambert series and their arithmetic properties, 
see \cite{Bor_1931}, \cite{Bor_1931a}, \cite{Fel_1932}, \cite{Gar_1936}, \cite{Doy_1939}.
On the applications of {M}\"{o}bius inversion in combinatorial analysis see \cite{BG_1975}.
In addition many of these studies have treated a generalised power product expansion (GPPE) \cite{GQ_2014} of the form
\begin{equation*}
	1+\sum_{n\geq 1}a_{n}x^{n} = \prod_{n\geq 1} \left( 1+f_{n}x^{n} \right)^{r_{n}} .
\label{GPPE}
\end{equation*}

Let us recall the generating function definitions \eqref{AGF} and form its logarithmic derivative
\begin{equation}
	x\frac{A'}{A} := \sum_{n=1}^{\infty} d_{n}x^{n} = \sum_{n=1}^{\infty} \frac{nf_{n}x^{n}}{1+f_{n}x^{n}},
\label{dGF}
\end{equation}
which introduces the Lambert series, see Knopp, Chap. XII, \S 58 C \cite{Knopp_1990}.
Let us write the partition of $ n $ as a sum of multiplicities 
$ \lambda(n) \equiv \left( 1^{\lambda_{1}}2^{\lambda_{2}} \cdots \right)\vdash n = 1\,\lambda_{1}+2\,\lambda_{2}+ \cdots +n\,\lambda_{n} $
(pg. 1  of \cite{Andrews_1998} for an explanation of the notation).
Gingold and Knopfmacher \cite{GK_1995} deduced the form of $ f_{n} $'s in terms of $ a_{n} $'s in Lemma 2.1, Eq. (2.3),
\begin{equation*}
	f_{n} = \sum_{\lambda \vdash n} c(\lambda)a_{1}^{\lambda_{1}} \cdots a_{n}^{\lambda_{n}} ,
\end{equation*}
without giving a general, closed formula for the coefficients $c(\lambda)$.
They investigated the absolute sum of these coefficients
\begin{equation*}
	B(n) := \sum_{\lambda(n)} |c(\lambda)| ,
\end{equation*}
and this sequence is recorded in OEIS \cite{OEIS} as {\tt A220418}.
They found some properties and in particular bounds for $ B(n) $,
\begin{equation*}
	\frac{2^{n-1}}{n} \leq B(n) < \frac{2^{n}}{n} , \quad n\geq 1 ,
\end{equation*}
along with the asymptotic growth estimates
\begin{equation*}
	B(n) \mathop{=}\limits_{n \to \infty} \frac{2^{n}}{n}\left( 1+{\rm O}(n^{-1}) \right) .
\end{equation*}

\subsection{$t_{n}$, $a_{n}$ and $d_{n}$-Coefficients}

The $t_{n}$ and $a_{n}$ coefficients are related by a linear relation for all $m\geq 0$,
\begin{equation}
	-t_{m+1} = t_{m}a_{1} + t_{m-1}a_{2} + \ldots + t_{1}a_{m} + t_{0}a_{m+1} .
\label{t<->a}
\end{equation}
This follows from comparing \eqref{t-f_relation} with \eqref{f-to-a:b} from which one can conclude
\begin{equation*}
	\sum_{l=0}^{m+1}t_{l}a_{m+1-l} = 0 ,
\end{equation*}
taking $ a_{0} = 1 $.
The above relation can be formulated as a matrix equation with a lower triangular Toeplitz structure whose solution for the $a_{n}$ coefficients is given by
\begin{equation}
	\begin{pmatrix}
	a_{1} \\ a_{2} \\ a_{3} \\ a_{4} \\ \vdots
	\end{pmatrix}
	= -
	\begin{pmatrix}
		t_{0} 	& 0 		& 0 		& 0 		& 0 & \cdots \\
		t_{1} 	& t_{0} 	& 0 		& 0 		& 0 & \cdots \\
		t_{2} 	& t_{1} 	& t_{0} 	& 0 		& 0 & \cdots \\	
		t_{3} 	& t_{2} 	& t_{1} 	& t_{0} 	& 0 & \cdots \\
		\vdots 			& \vdots 		& \vdots 		& \vdots 	& \vdots 			
	\end{pmatrix}^{-1}
	\begin{pmatrix}
	t_{1} \\ t_{2} \\ t_{3} \\ t_{4} \\ \vdots
	\end{pmatrix} .	
\end{equation}

Another formula for $a_{n}$, $n\geq 0$, can be found via the geometrical generating functions
\begin{align}
	a_{n} = -[x^{n}] \frac{\sum_{j=1}^{\infty} t_{j}x^{j}}{1+\sum_{j=1}^{\infty} t_{j}x^{j}} .
\label{t->a}
\end{align}
\begin{proposition}
We have
\begin{equation}
	a_{n} = \sum_{\substack{m_{l} \geq 0 \\ \sum_{l\geq 1}l m_{l} = n}} (-1)^{\sum_{l\geq 1}m_{l}} {{\sum_{l\geq 1}m_{l}}\choose{m_{1},m_{2},\ldots}} \prod_{l\geq 1}t^{m_{l}}_{l} ,
	\quad n\geq 1 ,
\label{a<-tProduct}
\end{equation}
or alternatively
\begin{equation}
	A(x) = \left( 1 + \sum_{l=1}^{\infty}t_{l}x^{l} \right)^{-1} .
\label{AGF<-tGF}
\end{equation}
\end{proposition}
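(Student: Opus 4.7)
My plan is to derive \eqref{AGF<-tGF} first as an immediate reformulation of the convolution identity that was already established in the lines preceding the proposition, and then obtain \eqref{a<-tProduct} as a corollary by expanding the resulting reciprocal with the multinomial theorem.

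\textbf{Step 1 (generating function identity).} From \eqref{t-f_relation} together with \eqref{f-to-a:b} it was just observed that
\begin{equation*}
  \sum_{l=0}^{m+1} t_{l} \, a_{m+1-l} = 0 \qquad (m \geq 0), \qquad a_{0} := 1,
\end{equation*}
with $t_{0}=1$ by definition. Setting $T(x) := 1 + \sum_{l\geq 1} t_{l} x^{l}$, this says precisely that the Cauchy product $T(x) A(x)$, viewed as a formal power series in $x$ over $\mathbb{Q}(\alpha,\gamma)$, has vanishing coefficient at every $x^{m+1}$ with $m\geq 0$. Since its constant term equals $t_{0} a_{0} = 1$, we get $T(x) A(x) = 1$, which is \eqref{AGF<-tGF}.

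\textbf{Step 2 (geometric expansion).} Because $T(x) - 1 = \sum_{l\geq 1} t_{l} x^{l}$ has zero constant term, the reciprocal admits the formal geometric expansion
\begin{equation*}
  A(x) = \frac{1}{T(x)} = \sum_{k\geq 0} (-1)^{k} \Bigl( \sum_{l\geq 1} t_{l} x^{l} \Bigr)^{k},
\end{equation*}
which is a legitimate identity in the ring of formal power series since, for every fixed $n$, only finitely many $k$ contribute to the coefficient of $x^{n}$ (namely $k \leq n$).

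\textbf{Step 3 (multinomial expansion).} Apply the multinomial theorem to each $k$-th power:
\begin{equation*}
  \Bigl( \sum_{l\geq 1} t_{l} x^{l} \Bigr)^{k} = \sum_{\substack{m_{l}\geq 0 \\ \sum_{l\geq 1} m_{l} = k}} \binom{k}{m_{1}, m_{2}, \ldots} \prod_{l\geq 1} t_{l}^{m_{l}} \, x^{\sum_{l\geq 1} l m_{l}}.
\end{equation*}
Extracting the coefficient of $x^{n}$ forces $\sum_{l\geq 1} l m_{l} = n$, and substituting $k = \sum_{l\geq 1} m_{l}$ for the sign yields
\begin{equation*}
  a_{n} = \sum_{\substack{m_{l}\geq 0 \\ \sum_{l\geq 1} l m_{l} = n}} (-1)^{\sum_{l\geq 1} m_{l}} \binom{\sum_{l\geq 1} m_{l}}{m_{1}, m_{2}, \ldots} \prod_{l\geq 1} t_{l}^{m_{l}},
\end{equation*}
which is \eqref{a<-tProduct}.

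There is no real obstacle: the hard combinatorial identity in \eqref{t-f_relation} has already been proved, so all that remains is formal series manipulation. The only point requiring care is confirming that each coefficient extraction in Steps 2 and 3 is a finite sum, which it is because both $\sum_l m_l \leq n$ and $\sum_l l m_l = n$ bound the number of admissible tuples.
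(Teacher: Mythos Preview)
Your proof is correct and follows essentially the same approach as the paper. The paper derives \eqref{AGF<-tGF} by invoking \eqref{t->a} (which is itself an immediate rewriting of the convolution $\sum_{l} t_{l} a_{m+1-l}=0$) and then, like you, obtains \eqref{a<-tProduct} by multinomial expansion of the reciprocal; your Step~1 just short-circuits the intermediate appeal to \eqref{t->a} by going straight from the convolution to $T(x)A(x)=1$.
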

\begin{proof}
Recalling \eqref{AGF}, relation \eqref{AGF<-tGF} follows directly from \eqref{t->a}.
Without loss of generality we can take $b_0$ to be unity in a series expansion of the reciprocal
\begin{equation}
	\left( 1+\sum_{n=1}^{\infty} b_{n}x^{n} \right)^{-1} =
	\sum_{n=0}^{\infty} x^{n} 
	\sum_{\substack{m_{l} \geq 0 \\ \sum_{l=1}l m_{l} = n}} (-1)^{\sum_{l}m_{l}} {{\sum_{l\geq 1}m_{l}}\choose{m_{1},m_{2},\ldots}} \prod_{l\geq 1}b^{m_{l}}_{l} ,	
\end{equation}
using the multinomial expansion, see \cite[\S 3.5]{Comtet_1974}, \cite{WW_2009}.
\end{proof}
Explicit solutions for low coefficients are:
\begin{gather}
	a_{1} = -t_1 ,
\label{a1<-t}
\\
	a_{2} = -t_2+t_1^2 ,
\label{a2<-t}
\\
	a_{3} = -t_3+2 t_2 t_1-t_1^3 ,
\label{a3<-t}
\\
	a_{4} = -t_4+2 t_3 t_1+t_2^2-3 t_2 t_1^2+t_1^4 ,
\label{a4<-t}
\\
	a_{5} =	-t_5+2 t_4 t_1+2 t_3 t_2-3 t_3 t_1^2-3 t_2^2 t_1+4 t_2 t_1^3-t_1^5 ,
\label{a5<-t}
\end{gather}
and
\begin{equation}
	a_{6} =	-t_6+2 t_5 t_1+2 t_4 t_2-3 t_4 t_1^2+t_3^2-6 t_3 t_2 t_1+4 t_3 t_1^3-t_2^3+6 t_2^2 t_1^2-5 t_2 t_1^4+t_1^6 .
\label{a6<-t}
\end{equation}
Note that the foregoing relations are completely reflexive, i.e. symmetrical under $ t_{n}\leftrightarrow a_{n} $, as one can observe from identity \eqref{AGF<-tGF}.

Recalling \eqref{dGF}, we can deduce a similar linear recurrence relation linking the $d_{n}$ coefficients to the $t_{m}$ coefficients.
\begin{proposition}
The $d_{n}$ coefficients and the $t_{m}$ coefficients satisfy the convolution relation
\begin{equation}
	-m t_m = \sum_{n=1}^{m} t_{m-n}d_{n} ,\quad m\geq 1,
\label{t<->d}
\end{equation}
which can be written as a recurrence relation for the $d_{n}$ coefficients in the forward direction
\begin{equation}
	d_{m} = - d_{m-1}t_{1} - \ldots - d_{1}t_{m-1} - mt_{m} ,\quad d_0=0, m\geq 1 .
\label{d-Recur}
\end{equation}  
\end{proposition}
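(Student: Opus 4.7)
The plan is to bypass working with the product form \eqref{AGF} directly for $A(x)$ and instead exploit the reciprocal identity \eqref{AGF<-tGF}, which says that $A(x) = 1/T(x)$ where $T(x) := 1 + \sum_{l\geq 1} t_{l}x^{l}$. Taking logarithmic derivatives of this reciprocal relation gives
\begin{equation*}
   x\frac{A'(x)}{A(x)} = -x\frac{T'(x)}{T(x)} ,
\end{equation*}
so that, by the defining series \eqref{dGF}, the Lambert-type generating series for $\{d_n\}$ is in fact
\begin{equation*}
   \sum_{n\geq 1} d_{n}x^{n} = -\frac{xT'(x)}{T(x)} .
\end{equation*}

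Next, I would clear the denominator to pass from this identity to an identity of formal power series with polynomial coefficients. Multiplying through by $T(x)$ yields
\begin{equation*}
   T(x)\sum_{n\geq 1} d_{n}x^{n} = -xT'(x) = -\sum_{m\geq 1} m\, t_{m}x^{m} ,
\end{equation*}
since $T(x)$ has no constant term beyond $1$. Expanding the left-hand side as a Cauchy product, with the convention $t_{0}=1$, gives
\begin{equation*}
   \sum_{m\geq 1} x^{m}\sum_{n=1}^{m} t_{m-n}d_{n} = -\sum_{m\geq 1} m\,t_{m}x^{m} ,
\end{equation*}
and matching coefficients of $x^{m}$ for each $m\geq 1$ produces exactly the convolution relation \eqref{t<->d}.

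Finally, to obtain the forward recurrence \eqref{d-Recur} I would isolate the $n=m$ term on the left (which equals $d_{m}$ since $t_{0}=1$) and solve:
\begin{equation*}
   d_{m} = -m\,t_{m} - \sum_{n=1}^{m-1} t_{m-n}d_{n} = -m\,t_{m} - t_{1}d_{m-1} - \cdots - t_{m-1}d_{1} ,
\end{equation*}
with the convention $d_{0}=0$ handling the boundary. There is no genuine obstacle here; the only subtle point is confirming that $A(x)$ and $T(x)$ can be manipulated as formal power series (so that term-by-term differentiation and reciprocation are legitimate), but this is automatic because $T(0)=A(0)=1$ and all the series in question are treated formally in the generating-function framework set up in \S\ref{PPE}.
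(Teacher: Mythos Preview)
Your proof is correct and follows essentially the same route as the paper: both invoke \eqref{AGF<-tGF} together with \eqref{dGF} to obtain $\sum_{n\geq 1}d_{n}x^{n} = -x\,T'(x)/T(x)$, from which the convolution relation drops out by equating coefficients. The paper simply leaves the Cauchy-product step implicit, whereas you have spelled it out.
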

\begin{proof}
It follows from \eqref{dGF} and \eqref{AGF<-tGF} that
\begin{equation*}
	\sum_{n\geq 1}d_{n}x^{n} = x\frac{d}{dx}\log A(x) = -x\frac{d}{dx}\log \bigg( 1+\sum_{l\geq 1}t_{l}x^{l} \bigg) ,
\end{equation*}
whence \eqref{d-Recur}.
\end{proof}

In addition the $d_{n}$ coefficients are related to the $a_{n}$ coefficients by the simple convolution relation
\begin{equation}
	n a_{n} = d_{n} + \sum_{k=1}^{n-1}d_{k}a_{n-k} ,\quad n\geq 1,
\label{a<->d}
\end{equation}
which follows from the definition.

\subsection{$f_{n}$-Coefficients}

The first six $f_{n}$ in terms of the $t_{n}$ are:
\begin{gather}
	f_1 = -t_1 ,
\label{f1<-t}
\\
	f_2 = t_1^2-t_2 ,
\label{f2<-t}
\\
	f_3 = t_1 t_2-t_3 ,
\label{f3<-t}
\\
	f_4 = t_1^4-2 t_2 t_1^2+t_3 t_1+t_2^2-t_4 ,
\label{f4<-t}
\\
	f_5 = t_2 t_1^3-t_3 t_1^2-t_2^2 t_1+t_4 t_1+t_2 t_3-t_5 ,
\label{f5<-t}
\end{gather}
and
\begin{equation}
	f_6 = t_3 t_1^3+t_2^2 t_1^2-t_4 t_1^2-3 t_2 t_3 t_1+t_5 t_1+t_3^2+t_2 t_4-t_6 .
\label{f6<-t}
\end{equation}

A fundamental relation derived by many authors \cite{Rit_1930}, \cite{Fel_1932}, \cite{GK_1995} relates the $f_{n}$ and the $d_{n}$,
with a number of these studies treating the more general setting of the GPPE.
\begin{proposition}[\cite{Rit_1930}, \cite{Fel_1932}, Eq. (2.12)\cite{GK_1995}]
The $f_{n}$ and the $d_{n}$ satisfy the arithmetic convolution relation
\begin{equation}
	f_{n} = \frac{1}{n}d_{n} + \sum_{\substack{d|n \\ d>1}} \frac{1}{d}\left( -f_{n/d} \right)^{d} .
\label{fCoeff<-dCoeff}
\end{equation}
\end{proposition}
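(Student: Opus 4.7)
The plan is to derive the relation by taking logarithms of the product definition of $A(x)$ in \eqref{AGF} and extracting the coefficients of $xA'/A$, which by \eqref{dGF} is precisely the generating function of the $d_n$. Since the identity should hold at the level of formal power series (compatible with its interpretation as a convergent expansion when applicable), I would proceed purely by manipulation of coefficients.

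First I would write
\begin{equation*}
\log A(x) = \sum_{n\geq 1} \log\!\left(1 + f_n x^n\right) = \sum_{n\geq 1}\sum_{k\geq 1} \frac{(-1)^{k-1}}{k} f_n^k x^{nk},
\end{equation*}
using the Mercator expansion of $\log(1+u)$ applied termwise (which is legitimate as a formal manipulation, and convergent in a neighbourhood of $x=0$ whenever $A(x)$ is). Applying the operator $x\,\frac{d}{dx}$ and using the definition \eqref{dGF} on the left-hand side yields
\begin{equation*}
\sum_{m\geq 1} d_m x^m = x\frac{A'(x)}{A(x)} = \sum_{n\geq 1}\sum_{k\geq 1} (-1)^{k-1}\, n\, f_n^k x^{nk}.
\end{equation*}

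Next I would extract the coefficient of $x^m$ on both sides. On the right, only pairs $(n,k)$ with $nk = m$ contribute, so reindexing by the divisor $k \mid m$ with $n = m/k$,
\begin{equation*}
d_m = \sum_{k \mid m} (-1)^{k-1}\, \frac{m}{k}\, f_{m/k}^k.
\end{equation*}
Dividing by $m$ and isolating the $k=1$ term, which contributes $f_m$ itself, I obtain
\begin{equation*}
\frac{d_m}{m} = f_m + \sum_{\substack{k \mid m \\ k > 1}} \frac{(-1)^{k-1}}{k} f_{m/k}^k .
\end{equation*}
Finally, moving the tail to the other side and absorbing the sign $(-1)^{k-1}$ via the identity $-(-1)^{k-1} f_{m/k}^k = (-1)^k f_{m/k}^k = (-f_{m/k})^k$, I arrive exactly at \eqref{fCoeff<-dCoeff} with $d$ playing the role of the divisor $k$.

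There is no real obstacle: the argument is a direct logarithmic-derivative extraction, and the only point requiring minor care is the sign bookkeeping in the last step, namely recognising that $\tfrac{(-1)^{k-1}}{k}f_{m/k}^k$ moved across the equation combines into $\tfrac{1}{k}(-f_{m/k})^k$. The treatment as formal series bypasses convergence issues; if one wishes to justify the manipulation analytically, it suffices to restrict to $|x|$ small enough that both $A(x)$ and each $1+f_n x^n$ are non-vanishing, which is automatic near $x=0$ under the paper's standing hypotheses.
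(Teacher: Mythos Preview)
Your proof is correct and follows essentially the same route as the paper: both arguments compute the logarithmic derivative of the product \eqref{AGF} and collect terms into a divisor sum, with the only cosmetic difference being that you expand $\log(1+f_n x^n)$ via the Mercator series and then apply $x\,d/dx$, whereas the paper applies $x\,d/dx$ first and then expands $(1+f_n x^n)^{-1}$ as a geometric series. The coefficient extraction, reindexing by divisors, and final sign bookkeeping are identical.
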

\begin{proof}
The proof of this is well-known and we reproduce it here for the convenience of the reader.
Starting with \eqref{dGF} one computes
\begin{align*}
	\frac{A'(x)}{A(x)} := \sum_{n=1}^{\infty} d_{n}x^{n-1} & = \sum_{n=1}^{\infty} \frac{nf_{n}x^{n-1}}{1+f_{n}x^{n}} ,
\\
	& = \sum_{n=1}^{\infty} nf_{n}x^{n-1} \sum_{m=0}^{\infty} (-1)^{m}f_{n}^{m}x^{mn} ,
\\
	& = \sum_{n\geq 1,m\geq 0} (-1)^{m}nf_{n}^{m+1}x^{(m+1)n-1}, \quad l=(m+1)n,
\\
	& = - \sum_{l\geq 1} x^{l-1} \sum_{n|l,n\geq 1} n(-1)^{l/n}f_{n}^{l/n},
\\
	& = - \sum_{l\geq 1} x^{l-1} \sum_{d|l,d\geq 1} (-1)^{d}\frac{l}{d}f_{l/d}^{d},
\end{align*}
and compares coefficients. Separating out the term with $d=1$ gives \eqref{fCoeff<-dCoeff}.
\end{proof}

\begin{remark}
Note that \eqref{fCoeff<-dCoeff} is not a Dirichlet convolution, so some of the powerful analytical number theory tools are not available here.
\end{remark}

\begin{remark}
Ritt \cite{Rit_1930}, Feld \cite{Fel_1932} and subsequent authors have undertaken studies of the growth of the $ f_{n} $ coefficients and the consequent convergence of the associated power-product expansion,
albeit in a more general structure, but with the assumption that $ r = \mathop{\limsup}\limits_{n \geq 1} |d_{n}|^{1/n} $ exists.
However as we will see this does not apply here, and in our case the rate of growth is faster.
\end{remark}

\section{Bounds and Growth Estimates}\label{Bounds+Growth}

We seek estimates and bounds on the growth of $ f_{m}(\alpha,\gamma) $ as $ m\to\infty $ for all $ 0\leq\alpha<1 $ and bounded $ \gamma>0 $.
We will undertake this task in a step-by-step manner, firstly for $ t_{n} $, then for $ d_{n} $ and finally $ f_{n} $.
Let us begin by recalling the definition \eqref{tDefn} of the $t_{n}$-sequence which contains all the $\alpha$, $\gamma$ dependencies that occur in the recursive formula \eqref{f_recur}, 
for $n\geq 0 $.

\begin{remark}
In the defining sum \eqref{tDefn} we can write the Pochhammer symbol as a descending factorial $ (-x)_{i} = (-1)^{i}x(x-1) \cdots (x-i+1) $ 
and employ the other defining characteristic of the second kind Stirling numbers as the coefficients of descending factorial expansion of the monomials (see Eq. 26.8.10 \cite{DLMF}),
\begin{equation*}
	\sum_{i=1}^{n} S(n,i)(-1)^{i}(-x)_{i} = \sum_{i=1}^{n} S(n,i) x(x-1) \cdots (x-i+1) = x^{n} .
\end{equation*}
This allows us to deduce the limiting identity $ t_{n}(\alpha,-\gamma) \mathop{\rightarrow}\limits_{\alpha \to \infty} (-1)^n $, 
which is obvious from the explicit formulae  \eqref{t1}--\eqref{t6}.
\end{remark}

The $t_n$ coefficients possess an exponential generating function, which we will subsequently show to be convergent about $ z=0 $ (see Prop. \ref{tUpperBounds}),
\begin{equation}
	\sum_{n=0}^{\infty} \frac{z^{n}}{n!} t_{n} = (1-\alpha)^{\gamma} \left[ 1-\alpha e^{-z/\gamma} \right]^{-\gamma} ,
\label{tExpGenFun}
\end{equation}
so that
\begin{equation}
	t_{n} = (-1)^{n}\left. (1-\alpha)^{\gamma}\gamma^{-n} \frac{d^{n}}{dz^{n}}\left[ 1-\alpha e^{z} \right]^{-\gamma}\right|_{z=0} ,
\label{tDeriv}
\end{equation}
or alternatively
\begin{equation}
	t_{n} = (-1)^{n} (1-\alpha)^{\gamma}\gamma^{-n} \frac{n!}{2\pi i} \oint_{C_0} \frac{dz}{z^{n+1}}\left[ 1-\alpha e^{z} \right]^{-\gamma} ,
\label{tLoopInt}
\end{equation}
where $ C_0 $ is a loop contour enclosing the origin and none of the branch points $ z_m = -\log\alpha + 2\pi im $, $ m\in \Z $.
The first result follows from the exponential generating function for the Stirling numbers of the second kind in \cite{DLMF}, Eq. 26.8.12. 
In addition these coefficients possess a purely formal geometrical generating function
\begin{equation}
	\sum_{n=0}^{\infty} z^{n} t_{n} = (1-\alpha)^{\gamma} {}_2F_1(\gamma,\gamma z^{-1};1+\gamma z^{-1};\alpha)
	= (1-\alpha)^{\gamma} \sum_{l\geq 0}\alpha^{l}\frac{(\gamma)_{l}}{l!}\frac{\gamma}{\gamma+lz} ,
\label{tGeomGenFun}
\end{equation} 
which is evident from the sequence of poles at $ z=-\gamma/m $, $ m\in \N $ accumulating at $ z=0 $.
This latter result can be deduced from \cite{DLMF}, Eq. 26.8.11, but such a deduction involves violating a technical condition known to apply here.

Next we give an identity which is the same as in Lemma \ref{PowerSum2StirlingSum}, but we reverse the logic that was employed earlier.
\begin{proposition}
The $t_n$ coefficients are given by the convergent expansion about $ \alpha = 0 $, $ |\alpha|<1 $:
\begin{equation}
	t_{n}(\alpha,\gamma) = (-1)^{n}\gamma^{-n}(1-\alpha)^{\gamma} \sum_{l=0}^{\infty} l^{n}\frac{(\gamma)_{l}}{l!}\alpha^{l} .
\label{tSeries}
\end{equation}
\end{proposition}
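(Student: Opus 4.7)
The plan is to observe that the claimed identity \eqref{tSeries} is essentially Lemma \ref{PowerSum2StirlingSum} read in the opposite direction. Starting from the definition \eqref{tDefn}
\begin{equation*}
    t_{n}(\alpha,\gamma) = (-1)^{n}\gamma^{-n} \sum_{i=0}^{n} S(n,i)(\gamma)_{i} \left( \frac{\alpha}{1-\alpha} \right)^{i} ,
\end{equation*}
I would pull out the factor $(1-\alpha)^{-\gamma}$ from the previously established identity
\begin{equation*}
    \sum_{k=0}^{\infty}\alpha^{k}\frac{(\gamma)_{k}}{k!}k^{n} = (1-\alpha)^{-\gamma}\sum_{i=0}^{n}S(n,i)(\gamma)_{i}\left( \frac{\alpha}{1-\alpha} \right)^{i} ,
\end{equation*}
which was Lemma \ref{PowerSum2StirlingSum} applied with $m=n$. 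Multiplying both sides by $(-1)^{n}\gamma^{-n}(1-\alpha)^{\gamma}$ and comparing with the definition of $t_n$ yields \eqref{tSeries} immediately. So the bulk of the "work" is already done and the argument is a one-line rearrangement.

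The second task is to justify the convergence claim for $|\alpha|<1$. Here I would appeal to the ratio or root test applied to the series $\sum_{l\geq 0} l^{n}(\gamma)_{l}\alpha^{l}/l!$. Using $(\gamma)_{l}/l! \sim l^{\gamma-1}/\Gamma(\gamma)$ as $l\to\infty$ and $l^{n/l}\to 1$, the $l$-th root of the absolute value of the general term tends to $|\alpha|<1$, so the series converges absolutely (and uniformly on compact subsets of the open disc). Equivalently, since $n$ is fixed and $l^n$ grows only polynomially, the series has the same disc of convergence as the binomial series $\sum_{l\geq 0}(\gamma)_{l}\alpha^{l}/l! = (1-\alpha)^{-\gamma}$.

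There is no genuine obstacle in this proposition; the technical content is entirely contained in the earlier Lemma \ref{PowerSum2StirlingSum}. The only thing worth flagging is the warning already made in the paragraph preceding the proposition, namely that while the geometrical generating function \eqref{tGeomGenFun} would formally suggest invoking a Stirling-number identity from \cite{DLMF}, that identity requires a technical condition not satisfied here; by contrast the exponential-generating-function route \eqref{tExpGenFun}, or equivalently the direct rearrangement of Lemma \ref{PowerSum2StirlingSum}, is entirely rigorous, which is the path I would take.
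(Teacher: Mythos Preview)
Your proof is correct. The identity \eqref{tSeries} is indeed nothing more than Lemma~\ref{PowerSum2StirlingSum} rewritten after multiplying through by $(-1)^{n}\gamma^{-n}(1-\alpha)^{\gamma}$, and your convergence argument via the root test (or equivalently, comparison with the binomial series after noting $(\gamma)_{l}/l!\sim l^{\gamma-1}/\Gamma(\gamma)$) is standard and sound.

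The paper, however, takes a genuinely different route. Rather than invoking Lemma~\ref{PowerSum2StirlingSum} directly, it starts from the contour-integral representation \eqref{tLoopInt}, specialises to a circular loop $z=re^{i\theta}$ with $0<r<\log\alpha^{-1}$, expands $[1-\alpha e^{z}]^{-\gamma}$ by the binomial theorem, and then evaluates the resulting angular integral term-by-term to recover the series \eqref{tSeries}. Your approach is shorter and more elementary, and it makes transparent that the proposition carries no new content beyond the earlier lemma. The paper's approach, by contrast, serves as an independent derivation that exercises and validates the loop-integral machinery \eqref{tLoopInt}; this is the same representation later used for the upper bound in Proposition~\ref{tUpperBounds} and the steepest-descent estimate \eqref{tSteepDescent}, so the contour computation here doubles as a warm-up and consistency check for those subsequent arguments.
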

\begin{proof}
We choose a circular loop centred on the origin for $C_0 $, $z=re^{i\theta}$, with $ 0<r<\log\alpha^{-1}$.
Then we expand the radical factor of the integral \eqref{tLoopInt} via the binomial theorem arriving at
\begin{equation*}
	t_{n} = (-1)^{n}\gamma^{-n}(1-\alpha)^{\gamma}n! r^{-n} \sum_{l=0}^{\infty}\alpha^{l}\frac{(\gamma)_{l}}{l!}
			\int_{-\pi}^{\pi}\frac{d\theta}{2\pi}e^{-in\theta}\exp(lre^{i\theta}) .
\end{equation*}
The angular integral is easily evaluated by series expanding the exponential in the last factor in the integrand and we find this integral is $(lr)^{n}/n!$.
This facilitates the cancelling of all the $r$-dependence, and we have \eqref{tSeries}.
The ratio test then gives us the radius of convergence, as stated.
\end{proof}

\begin{proposition}
The $ t_{n}(\alpha,\gamma) $ coefficients satisfy the mixed difference equation
\begin{equation}
	t_{n}(\alpha,\gamma) = t_{n-1}(\alpha,\gamma) - \frac{1}{1-\alpha}\left( 1+\gamma^{-1} \right)^{n-1}t_{n-1}(\alpha,\gamma+1) ,
\label{tRecur}
\end{equation} 
or, in terms of the re-defined coefficient $ \tilde{t}_{n}(\alpha,\gamma) = \gamma^{n}t_{n}(\alpha,\gamma) $,
\begin{equation}
	\tilde{t}_{n}(\alpha,\gamma) = \gamma\tilde{t}_{n-1}(\alpha,\gamma) - \frac{\gamma}{1-\alpha}\tilde{t}_{n-1}(\alpha,\gamma+1) ,
\label{tAltRecur}
\end{equation}
\end{proposition}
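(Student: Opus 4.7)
The plan is to work with the rescaled sequence $\tilde{t}_n := \gamma^n t_n$, establish \eqref{tAltRecur} first, and then recover \eqref{tRecur} by unwinding the substitution. The decisive advantage of the rescaling is that the exponential generating function \eqref{tExpGenFun}, evaluated at $\gamma z$ instead of $z$, loses the internal factor $1/\gamma$ and becomes
\[
U_\gamma(z) := \sum_{n\geq 0}\tilde{t}_n(\alpha,\gamma)\frac{z^n}{n!} = (1-\alpha)^\gamma\bigl[1-\alpha e^{-z}\bigr]^{-\gamma}.
\]
In this form a clean differential-difference relation between $U_\gamma$ and $U_{\gamma+1}$ is visible.

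I would then verify, by direct differentiation, that
\[
U'_\gamma(z) = \gamma\, U_\gamma(z) - \frac{\gamma}{1-\alpha}\, U_{\gamma+1}(z).
\]
The left-hand side, computed from the closed form of $U_\gamma$, equals $-\gamma\alpha e^{-z}(1-\alpha)^\gamma[1-\alpha e^{-z}]^{-\gamma-1}$. On the right, pulling out the common factor $(1-\alpha)^\gamma[1-\alpha e^{-z}]^{-\gamma-1}$ and using the trivial identity $[1-\alpha e^{-z}]-1 = -\alpha e^{-z}$ produces the same expression. Comparing the coefficients of $z^{n-1}/(n-1)!$ on both sides yields \eqref{tAltRecur} at once; then dividing by $\gamma^n$ and absorbing the factor $(\gamma+1)^{-(n-1)}$ that appears when $\tilde{t}_{n-1}(\alpha,\gamma+1) = (\gamma+1)^{n-1}t_{n-1}(\alpha,\gamma+1)$ is reintroduced reproduces the $(1+\gamma^{-1})^{n-1}$ factor of \eqref{tRecur}.

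There is no substantial obstacle: the EGF has positive radius of convergence (to be established in Prop.~\ref{tUpperBounds}), so the coefficient extraction is legitimate. As a purely algebraic alternative, free of any analytic input, one can work directly from the absolutely convergent series \eqref{tSeries} and, after cancelling the prefactor $(-1)^n\gamma^{-n}(1-\alpha)^\gamma$, reduce the claim to the identity
\[
\sum_{l\geq 1} l^n\,\frac{(\gamma)_l}{l!}\,\alpha^l \;=\; \gamma \sum_{l\geq 1} l^{n-1}\,\frac{\alpha^l}{l!}\bigl[(\gamma+1)_l - (\gamma)_l\bigr],
\]
which is immediate from the Pochhammer recurrence $(\gamma+1)_l - (\gamma)_l = l\,(\gamma+1)_{l-1}$ together with $(\gamma)_l = \gamma(\gamma+1)_{l-1}$ for $l\geq 1$. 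Either route is short; the EGF route is the more transparent because the recurrence is essentially built into the analytic structure $[1-\alpha e^{-z}]^{-\gamma}$.
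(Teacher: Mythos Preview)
Your argument is correct, and both the EGF route and the series-identity alternative go through without gaps. The paper, however, takes a different path: it works directly from the defining sum \eqref{tDefn} and applies the Stirling recurrence $S(n,i)=iS(n-1,i)+S(n-1,i-1)$ term by term, then uses the Pochhammer splitting $i(\gamma)_i=\gamma(\gamma+1)_i-\gamma(\gamma)_i$ and an index shift to recognise two copies of \eqref{tDefn} with shifted parameters. So the paper's proof is combinatorial (recurrence on Stirling numbers), whereas yours is analytic (differentiate the closed-form EGF). Your approach has the advantage of being essentially a one-line computation once the EGF is in hand, and it makes the structure of the recurrence transparent as a derivative identity for $[1-\alpha e^{-z}]^{-\gamma}$; the paper's approach, on the other hand, requires no forward reference to the convergence of the EGF and stays entirely within the finite-sum definition \eqref{tDefn}. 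Your second, purely algebraic variant via \eqref{tSeries} is in spirit the closest to the paper's argument, since the Pochhammer identity $(\gamma+1)_l-(\gamma)_l=l(\gamma+1)_{l-1}$ you invoke is exactly dual to the manipulation the paper performs on $i(\gamma)_i$.
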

\begin{proof}
The Stirling numbers of the second kind satisfy the recurrence relation, see Eq. 26.8.22 \cite{DLMF},
\begin{equation*}
	S(n,i) = iS(n-1,i)+S(n-1,i-1), \quad 0<i\leq n-1, n>1 .
\end{equation*}
Employing this in \eqref{tDefn} we have two terms on the right-hand side: 
in the first of these we write $ i=\gamma+i-\gamma $ so that $ i(\gamma)_{i}=(\gamma)_{i+1}-\gamma(\gamma)_{i} = \gamma(\gamma+1)_i-\gamma(\gamma)_{i} $ ;
in the second we replace $ i \mapsto i+1 $ and use $ (\gamma)_{i+1}=\gamma(\gamma+1)_{i} $ again.
As a result we have three terms which are of the same form as the definition \eqref{tDefn} with additional factors, and two of these coalesce. 
Simplifying we have \eqref{tRecur}.
\end{proof}

Central to our task will be knowledge of the growth of the $ t_{n} $ coefficients with respect to $n$, 
which will be obtained in two independent ways - via explicit upper and lower bounds for the magnitude of $ t_{n} $ and secondly via asymptotic estimates of the value of $ t_{n} $.
Our bounds will in fact be sharp and both will converge to our estimates.

One can find upper and lower bounds to the magnitude of the $t_n$ coefficients with a very similar form. 
\begin{proposition}\label{tLUBounds}
Let $ 0<\gamma<1 $ and $ 0<\alpha<1 $.
The magnitude of the $ t_{n} $ coefficients are bounded above through the formula 
\begin{equation}
	|t_{n}| < (1-\alpha)^{\gamma} \frac{\gamma^{-n}}{\Gamma(\gamma)}{\rm Li}_{-n-\gamma+1}(\alpha) ,
\label{tUpperBound}
\end{equation}
where $ {\rm Li}_{s}(z) $ is the polylogarithm as defined by \cite{DLMF}, Eq. 25.12.10.
The magnitude of the $ t_{n} $ coefficients are bounded below through the formula 
\begin{equation}
	|t_{n}| > (1-\alpha)^{\gamma} \frac{\gamma^{-n}}{\Gamma(\gamma)}
	\left[ {\rm Li}_{-n-\gamma+1}(\alpha)-(1-\gamma){\rm Li}_{-n-\gamma+2}(\alpha) \right] .
\label{tLowerBound}
\end{equation}
\end{proposition}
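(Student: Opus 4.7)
The plan is to work directly from the convergent series \eqref{tSeries} and reduce both bounds to two-sided estimates on the Gamma-function quotient $\Gamma(l+\gamma)/\Gamma(l+1)$. Under the standing assumptions $0<\alpha<1$, $\gamma>0$, every summand in \eqref{tSeries} is non-negative, so
\begin{equation*}
   |t_{n}| = (1-\alpha)^{\gamma}\gamma^{-n} \sum_{l\geq 1} l^{n}\frac{(\gamma)_{l}}{l!}\alpha^{l}
          = \frac{(1-\alpha)^{\gamma}\gamma^{-n}}{\Gamma(\gamma)} \sum_{l\geq 1} l^{n}\frac{\Gamma(l+\gamma)}{\Gamma(l+1)}\alpha^{l},
\end{equation*}
where I have used $(\gamma)_{l}/l! = \Gamma(l+\gamma)/(\Gamma(\gamma)\Gamma(l+1))$ and dropped the vanishing $l=0$ term. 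The definition $\mathrm{Li}_{s}(\alpha)=\sum_{l\geq 1}\alpha^{l}/l^{s}$ identifies the polylogarithms in \eqref{tUpperBound} and \eqref{tLowerBound} once the factor $\Gamma(l+\gamma)/\Gamma(l+1)$ is compared with the monomial $l^{\gamma-1}$.

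For the upper bound I would invoke Wendel's classical inequality, which asserts that for $0<s<1$ and $x>0$,
\begin{equation*}
   \left(\frac{x}{x+s}\right)^{1-s} x^{s}\Gamma(x) \leq \Gamma(x+s) \leq x^{s}\Gamma(x),
\end{equation*}
with both inequalities strict for $s\in(0,1)$. Setting $x=l$, $s=\gamma\in(0,1)$ and dividing by $\Gamma(l+1)=l\Gamma(l)$ gives $\Gamma(l+\gamma)/\Gamma(l+1)<l^{\gamma-1}$ term-by-term; multiplying by $l^{n}\alpha^{l}>0$ and summing produces \eqref{tUpperBound}.

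For the lower bound I would exploit the other half of Wendel's inequality to obtain $\Gamma(l+\gamma)/\Gamma(l+1)\geq(1+\gamma/l)^{-(1-\gamma)}l^{\gamma-1}$, and then convert the radical factor into the requested linear correction via a two-step Bernoulli-type estimate. First, $(1+u)^{-a}\geq 1-au$ for $u\geq 0$ and $a\in[0,1]$ (easily checked by differentiating $f(u)=(1+u)^{-a}-1+au$), applied with $u=\gamma/l$ and $a=1-\gamma$, yields $(1+\gamma/l)^{-(1-\gamma)}\geq 1-\gamma(1-\gamma)/l$. Second, since $\gamma<1$, one has $\gamma(1-\gamma)<(1-\gamma)$, so
\begin{equation*}
   (1+\gamma/l)^{-(1-\gamma)} \;\geq\; 1 - \frac{\gamma(1-\gamma)}{l} \;>\; 1 - \frac{1-\gamma}{l}.
\end{equation*}
Multiplying by $l^{\gamma-1}$ gives $\Gamma(l+\gamma)/\Gamma(l+1) > l^{\gamma-1}-(1-\gamma)l^{\gamma-2}$ for every $l\geq 1$, and summation against the positive weights $l^{n}\alpha^{l}$ delivers \eqref{tLowerBound}.

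The main obstacle is producing exactly the coefficient $(1-\gamma)$ in front of $\mathrm{Li}_{-n-\gamma+2}(\alpha)$ for all $l\geq 1$, since a naive asymptotic expansion only yields the weaker coefficient $\gamma(1-\gamma)/2$. The resolution is the slack built into the Bernoulli step above: one trades sharpness in $\gamma$ for uniform validity in $l$, and the comparison $\gamma(1-\gamma)<1-\gamma$ closes the gap. Checking the tight case $l=1$ (where the right-hand side of \eqref{tLowerBound} reduces to $\gamma$ and the Gamma quotient equals $\Gamma(1+\gamma)=\gamma\Gamma(\gamma)>\gamma$ because $\Gamma(\gamma)>1$ on $(0,1)$) confirms that nothing degenerates at the boundary.
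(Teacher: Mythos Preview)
Your proof is correct and follows essentially the same strategy as the paper: bound the ratio $\Gamma(l+\gamma)/\Gamma(l+1)$ termwise in the series \eqref{tSeries} by a classical Gamma-quotient inequality and identify the resulting sums as polylogarithms. The only cosmetic difference is in the lower bound, where the paper applies Gautschi's inequality after the shift $x\mapsto l-1$ to obtain $\Gamma(l+\gamma)/\Gamma(l+1)>(l+\gamma-1)l^{\gamma-2}$ directly, while you arrive at the identical termwise estimate via Wendel's lower bound together with a Bernoulli step and the comparison $\gamma(1-\gamma)<1-\gamma$.
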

\begin{proof}
Our strategy is to apply bounds for the ratio $ (\gamma)_{l}/l! $ in formula \eqref{tSeries}. 
The simplest bounds suitable for this purpose is Gautschi's inequality, Eq. 5.6.4 of \cite{DLMF} or the original \cite{Gau_1959/60} which states, subject to $x>0$ and $0<s<1$,
\begin{equation*}
	(x+1)^{s-1} < \frac{\Gamma(x+s)}{\Gamma(x+1)} < x^{s-1} .
\end{equation*}
For the upper bound we use
\begin{equation*}
	\frac{\Gamma(l+\gamma)}{\Gamma(l+1)} < l^{\gamma-1} ,
\end{equation*}
and the definition of the polylogarithm.
For the lower bound we deduce a shifted identity derived from Gautschi's lower bound, namely
\begin{equation*}
	\frac{\Gamma(l+\gamma)}{\Gamma(l+1)} > (l+\gamma-1)l^{\gamma-2} ,
\end{equation*}
which introduces an additional, negative term.
\end{proof}

\begin{remark}
For the asymptotic forms of the bounds in Prop.~\ref{tLUBounds} we require polylogarithms with large, negative order and the following series representation suffices for this.
Let $ |\mu|<2\pi $ and $ s\neq 1,2,3,\ldots $ then we have \cite[Eq. 25.12.12]{DLMF}
\begin{equation*}
	{\rm Li}_{s}(e^{\mu}) = \Gamma(1-s)(-\mu)^{s-1} + \sum_{k=0}^{\infty}\frac{\zeta(s-k)}{k!}\mu^{k} ,
\end{equation*}
where $\zeta(s)$ is the Riemann zeta function, analytically continued to $ {\rm Re}(s)<0 $.
If the large $n$ asymptotics of the polylogarithm are employed then the above bounds reduce to
\begin{equation}
	|t_{n}| < (1-\alpha)^{\gamma} \frac{(\gamma)_{n}}{\gamma^{n}}\left( \log\alpha^{-1} \right)^{-n-\gamma} ,
\label{tUpperEst}
\end{equation}
and
\begin{equation}
	|t_{n}| > (1-\alpha)^{\gamma} \frac{(\gamma)_{n}}{\gamma^{n}}\left( \log\alpha^{-1} \right)^{-n-\gamma}
	\left[ 1 - \frac{(1-\gamma)}{n+\gamma-1}\log\alpha^{-1} \right] .
\label{tLowerEst}
\end{equation}
\end{remark}

In addition to the proof of Prop.~\ref{tLUBounds} there is an alternative approach to the upper bound.
\begin{proposition}\label{tUpperBounds}
The magnitude of the $ t_{n} $ coefficients are bounded above through the formula 
\begin{equation}
	|t_{n}| \leq (1-\alpha)^{\gamma} n! \left(\gamma r_0\right)^{-n-\gamma}\left(n+\gamma r_0\right)^{\gamma} ,
\label{tMinSup}
\end{equation}
where $ r_0 $ is given by
\begin{equation}
	r_0+\frac{n}{\gamma} = W_{0}\large( \tfrac{n}{\alpha\gamma}e^{n/\gamma}\large) ,
\label{r0Sup}
\end{equation}
with $ W_{0}(\cdot) $ being the principal branch of the Lambert $W$-function, \cite{DLMF} \S 4.13.
If the asymptotic formula for the Lambert $W_0$-function is again employed in the above equation we have
\begin{equation}
	\alpha e^{r_0} \mathop{\sim}\limits_{n \to \infty} \frac{n}{n-\gamma\log\alpha(1-\gamma n^{-1}+{\rm O}(n^{-2}))} .
\end{equation} 
\end{proposition}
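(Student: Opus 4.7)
The plan is to derive the bound directly from the loop integral representation \eqref{tLoopInt} via Cauchy's $ML$-estimate, and then to optimise elementarily over the contour radius. Take $C_0$ to be the circle $z = re^{i\theta}$ with $0 < r < \log\alpha^{-1}$, so the factor $(1 - \alpha e^z)^{-\gamma}$ is holomorphic on and inside the loop. The reverse triangle inequality together with $\alpha e^{\mathrm{Re}(z)} \leq \alpha e^r < 1$ on the contour gives $|1 - \alpha e^z| \geq 1 - \alpha e^{\mathrm{Re}(z)} \geq 1 - \alpha e^r$, with equality attained at $z = r$. Hence
\[
|t_n| \;\leq\; (1-\alpha)^\gamma \gamma^{-n} n! \cdot r^{-n}(1 - \alpha e^r)^{-\gamma}.
\]

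Next I would minimise the right-hand side over $r \in (0, \log\alpha^{-1})$. Setting the $r$-derivative of the logarithm of the bound to zero produces the critical equation $\alpha e^{r_0}(\gamma r_0 + n) = n$, from which $1 - \alpha e^{r_0} = \gamma r_0/(n + \gamma r_0)$. Substituting this back into the previous display yields \eqref{tMinSup} directly. Rearranging the critical equation as $(r_0 + n/\gamma)\, e^{r_0 + n/\gamma} = (n/(\alpha\gamma))\, e^{n/\gamma}$ and inverting through the principal branch of the Lambert $W$-function produces \eqref{r0Sup}. One checks that the argument of $W_0$ is positive so that $W_0$ is real and single-valued, and that the resulting $r_0$ automatically lies in $(0, \log\alpha^{-1})$, so the contour chosen above is admissible and realises the supremum-minimising value.

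For the asymptotic statement I would invoke the standard large-argument expansion $W_0(y) = \log y - \log\log y + O(\log\log y / \log y)$ as $y \to \infty$. Rather than expand the Lambert $W$ directly, however, it is cleaner to substitute $r_0 = -\log\alpha + c(n)$ with $c(n) \to 0$ into the critical equation itself, obtaining $e^{c(n)} = n/(n - \gamma\log\alpha + \gamma c(n))$. Iterating this fixed-point relation twice produces $c(n) = (\gamma\log\alpha)/n + O(n^{-2})$, and combining with the identity $\alpha e^{r_0} = n/(n + \gamma r_0)$ then gives the stated form. The main obstacle is only bookkeeping: verifying that the second step of the fixed-point iteration delivers the $O(n^{-2})$ remainder uniformly in the parameter range under consideration, and that the expansion of $W_0$ is consistent with this direct perturbative calculation.
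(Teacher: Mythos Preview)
Your proposal is correct and follows essentially the same approach as the paper: the Cauchy estimate on a circular contour of radius $r<\log\alpha^{-1}$, identification of the supremum at $\theta=0$, and optimisation in $r$ leading to the Lambert-$W$ critical equation are all exactly as in the paper's proof. The one minor difference is in the asymptotic step: the paper expands $W_0$ directly and notes (in a footnote) that one must go to third order to avoid spurious $\log n$ terms that cancel at the next order, whereas your direct fixed-point substitution $r_0=-\log\alpha+c(n)$ sidesteps that subtlety entirely and is arguably the cleaner route.
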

\begin{proof}
Starting from \eqref{tLoopInt} specialised to a circular loop of radius $r$ one can deduce the upper bound
\begin{equation*}
	|t_{n}| \leq (1-\alpha)^{\gamma}\gamma^{-n}n! \sup_{z=r e^{i\theta},-\pi<\theta<\pi} r^{-n}\left| 1-\alpha e^{z} \right|^{-\gamma} .
\end{equation*}
The right-hand side has a supremum at $ \theta=0 $ of $ r^{-n}(1-\alpha e^{r})^{-\gamma} $.
This supremum can be minimised with respect to $r$ at $r_{0}$ given by
\begin{equation*}
	\alpha e^{r_{0}} = \frac{n}{n+\gamma r_{0}} ,
\end{equation*}
or by \eqref{r0Sup}. Clearly $ r_{0}< \log\alpha^{-1} $ as $ 1-\alpha e^{r_{0}} = \alpha\gamma n^{-1}r_{0}e^{r_{0}} $.
Furthermore, due to the elementary inequality $ (1+r)^{-1} > e^{-r} $ for $r>0$, we can deduce the lower bound $ n(n+\gamma)^{-1}\log\alpha^{-1} < r_{0} $.
The large $n$ asymptotics of $r_{0}$ follow from those of the 
$W_0$-Lambert function taken to third order\footnote{The author is indebted to a reviewer's insight that one has to take the expansion to third order to obtain
the correct sub-leading terms. If the expansion is terminated prematurely one gets contributions such as $\log n$ terms which end up being cancelled at the next order.}
in \cite[Eq. 4.13.1\_1]{DLMF}.
\end{proof}

\begin{proposition}
The $ t_{n} $ coefficients possess the large $n$ asymptotic growth formula
\begin{equation}
	t_{n} \mathop{\sim}\limits_{n \to \infty} (-1)^{n} \frac{(1-\alpha)^{\gamma}}{\sqrt{2\pi}} n! (n+1)^{-1/2} z_0^{-n-\gamma}\left(z_0+\frac{n+1}{\gamma}\right)^{\gamma}\left(z_0+\frac{n+1}{\gamma}+1\right)^{-1/2} ,
\label{tSteepDescent}
\end{equation}
where $ z_0 $ is given by
\begin{equation}
	z_0+\tfrac{n+1}{\gamma} = W_{0}\large( \tfrac{n+1}{\alpha\gamma}e^{(n+1)/\gamma}\large) ,
\label{tCritical}
\end{equation}
with $ W(\cdot) $ being the principal branch of the Lambert $W$-function, \cite{DLMF} \S 4.13.

If the asymptotic formula for the Lambert $W_0$-function is employed for $z_0 $ in the above equation, (see \cite{DLMF}, Eq. 4.13.1\_1), one has 
\begin{equation}
	t_{n} \mathop{\sim}\limits_{n \to \infty} (-1)^{n} \frac{1}{\sqrt{2\pi}}(1-\alpha)^{\gamma}\gamma^{1/2} n! (n+1)^{\gamma-1}\left(\gamma\log\alpha^{-1}\right)^{-n-\gamma} ,
\label{tAsymptotic}
\end{equation}
or the $n$-th root growth
\begin{equation}
	\left|t_{n}\right|^{1/n} \mathop{\sim}\limits_{n \to \infty} \frac{n}{e\gamma\log\alpha^{-1}} .
\label{tRootGrowth}
\end{equation}
A variant of the above estimate which is simpler for our purposes is the following formula
\begin{equation}
	t_{n} \mathop{\sim}\limits_{n \to \infty} (-1)^{n} (1-\alpha)^{\gamma} \frac{(\gamma)_{n}}{\gamma^{n}}\left( \log\alpha^{-1} \right)^{-n-\gamma} .
\label{tLargeOrder}
\end{equation}
\end{proposition}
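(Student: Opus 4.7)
The plan is to apply the method of steepest descent to the loop integral \eqref{tLoopInt}. Writing the integrand as $\exp[-(n+1)\log z - \gamma\log(1-\alpha e^{z})]$ we set
\begin{equation*}
  \phi(z) := (n+1)\log z + \gamma\log(1-\alpha e^{z}),
\end{equation*}
so that we need to analyse $\oint_{C_0} e^{-\phi(z)}\,dz/z$. The critical point equation $\phi'(z_0)=0$ reads $(n+1)/z_0 = \gamma\alpha e^{z_0}/(1-\alpha e^{z_0})$, which rearranges to $\alpha e^{z_0}(n+1+\gamma z_0) = n+1$. Setting $u := z_0 + (n+1)/\gamma$ this becomes $u e^{u} = \tfrac{n+1}{\alpha\gamma} e^{(n+1)/\gamma}$, whence $u = W_0\!\bigl(\tfrac{n+1}{\alpha\gamma}e^{(n+1)/\gamma}\bigr)$ as in \eqref{tCritical}, with $z_0$ real and lying in the interval $(0,\log\alpha^{-1})$ so that the saddle sits inside the annulus of analyticity bounded by the nearest branch points $z_m = -\log\alpha + 2\pi i m$.

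Next I would deform $C_0$ to the vertical line $\{z_0 + it : t\in\R\}$ through the saddle, which is the direction of steepest descent since $\phi''(z_0)>0$. A direct computation using $1-\alpha e^{z_0}=\gamma z_0/(n+1+\gamma z_0) = \gamma z_0/(\gamma u)$ gives
\begin{equation*}
  \phi''(z_0) = \frac{n+1}{z_0^{2}} + \frac{\gamma\alpha e^{z_0}}{(1-\alpha e^{z_0})^{2}} = \frac{n+1}{z_0^{2}}\cdot(u+1),
\end{equation*}
and the saddle value is $e^{-\phi(z_0)} = z_0^{-n-1}(\gamma z_0/(\gamma u))^{-\gamma} = z_0^{-n-1-\gamma}u^{\gamma}$. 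Inserting these into the standard Gaussian approximation
\begin{equation*}
  \frac{1}{2\pi i}\oint \frac{dz}{z^{n+1}}[1-\alpha e^{z}]^{-\gamma} \sim \frac{e^{-\phi(z_0)}}{\sqrt{2\pi\phi''(z_0)}},
\end{equation*}
multiplying by the prefactor $(-1)^{n}(1-\alpha)^{\gamma}\gamma^{-n}n!$ from \eqref{tLoopInt}, and collecting all the pieces, yields the explicit formula \eqref{tSteepDescent}.

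The passage from \eqref{tSteepDescent} to \eqref{tAsymptotic} will be a matter of substituting the known large-argument expansion of $W_0$, namely $W_0(x)=\log x - \log\log x + o(1)$ as $x\to\infty$, applied with $x=\tfrac{n+1}{\alpha\gamma}e^{(n+1)/\gamma}$. As the footnote warns, one must retain the expansion through third order to avoid spurious $\log n$ contributions; this gives $u\sim (n+1)/\gamma + \log\alpha^{-1} + O(1/n)$ and $z_0\to\log\alpha^{-1}$, so that $z_0^{-n-\gamma}\sim(\log\alpha^{-1})^{-n-\gamma}$, $u^{\gamma}\sim((n+1)/\gamma)^{\gamma}$, and $(u+1)^{-1/2}\sim((n+1)/\gamma)^{-1/2}$. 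Combining the powers of $\gamma$ and $n+1$ produces \eqref{tAsymptotic}. Taking $n$-th roots, Stirling's formula $n!^{1/n}\sim n/e$ then delivers \eqref{tRootGrowth}, and finally \eqref{tLargeOrder} is obtained by rewriting $n!(n+1)^{\gamma-1}$ via Stirling as $(\gamma)_n$ times constants up to the leading order claimed.

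The principal obstacle I anticipate is twofold. First, one must justify that the deformed contour can be made a genuine steepest descent contour that stays bounded away from the branch points $z_m$ and on which the integrand decays fast enough for the Gaussian approximation to be asymptotically valid; a routine analysis of $\RE\phi$ along the vertical line through $z_0$, combined with the bound $z_0<\log\alpha^{-1}$ already noted in Prop.~\ref{tUpperBounds}, should handle this. Second, the bookkeeping in the $W_0$-expansion needs to be done carefully enough that the $\gamma^{1/2}$ factor in \eqref{tAsymptotic} emerges correctly rather than being absorbed into subleading error terms, which is precisely the third-order subtlety flagged in the footnote.
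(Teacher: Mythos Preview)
Your approach is essentially identical to the paper's: a steepest-descent analysis of the loop integral \eqref{tLoopInt} with the same phase function, the same critical-point equation solved via the Lambert $W$-function, and the same second-derivative computation leading to \eqref{tSteepDescent}. One minor slip to fix: since $\phi(z)=(n+1)\log z+\gamma\log(1-\alpha e^{z})$ you actually have $\phi''(z_0)=-\tfrac{n+1}{z_0^{2}}(u+1)<0$ (both terms carry a minus sign), so the vertical line through $z_0$ is the steepest-descent direction precisely because $\phi''(z_0)<0$, not $>0$; your two sign errors compensate and the final formula is unaffected.
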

\begin{proof}
We perform a steepest descent analysis of the integral \eqref{tLoopInt}.
An excellent reference for this method is \cite[Chap. 8]{Flajolet+Sedgewick_2009}.
The phase function is $ f(z) = -(n+1)\log z - \gamma\log(1-\alpha e^{z}) $ with a unique critical point denoted by $z_{0}$ and the solution of
\begin{equation*}
	\frac{n+1}{z_{0}} = \alpha\gamma\frac{e^{z_{0}}}{1-\alpha e^{z_{0}}} ,
\end{equation*}
which is solved by \eqref{tCritical}.
Employing the second derivative
\begin{equation}
	f_{0}^{''} = \frac{n+1}{z^2_{0}}\left[ 1+\frac{n+1}{\gamma}+z_{0} \right] ,
\end{equation}
we find the leading term to be given by \eqref{tSteepDescent}.
\end{proof}

The growth of the $ d_{n} $ coefficients with respect to $n$ is required and turns out that this is controlled by the growth of the $t_{m}$ coefficients -
the last term on the right-hand side of \eqref{d-Recur}, $mt_m$, dominates the other terms.
\begin{proposition}\label{dGrowth}
As $ m\to \infty $ $ d_{m} $ grows as
\begin{equation}
	-\frac{d_{m}}{mt_{m}} = 1+{\rm O}(m^{-1}) .
\label{d2tratio}
\end{equation}
\end{proposition}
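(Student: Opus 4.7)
The plan is to divide the recurrence \eqref{d-Recur} by $-m t_m$ and rewrite the claim as
\begin{equation*}
	-\frac{d_m}{m t_m} - 1 = \frac{1}{m t_m} \sum_{k=1}^{m-1} d_{m-k} t_k .
\end{equation*}
Since dividing by $m t_m$ produces a factor $1/m$, it is enough to show that the convolution $\sum_{k=1}^{m-1} d_{m-k} t_k$ is $O(|t_m|)$. The intuition is the one foreshadowed in the statement: among the summands of \eqref{d-Recur} only $-m t_m$ contains a single $t$-coefficient at the full factorial growth rate \eqref{tAsymptotic}, while each product $t_{m-k}t_k$ loses a binomial-style factor. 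Note also that \eqref{tAsymptotic} forces $\mathrm{sgn}(t_{m-k}t_k) = (-1)^m = \mathrm{sgn}(t_m)$ for $k \in \{1,\dots,m-1\}$, so no sign cancellation occurs.

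The quantitative step is to bound the auxiliary sum
\begin{equation*}
	T_m := \sum_{k=1}^{m-1} (m-k) \frac{|t_{m-k} t_k|}{|t_m|} .
\end{equation*}
Using the leading asymptotic \eqref{tLargeOrder}, for fixed $k$ one has
\begin{equation*}
	\frac{|t_{m-k}|}{|t_m|} \sim \frac{(\gamma\log\alpha^{-1})^k}{(\gamma+m-k)\cdots(\gamma+m-1)} \sim \frac{(\gamma\log\alpha^{-1})^k}{m^k}, \quad m\to\infty ,
\end{equation*}
so the $k$-th summand of $T_m$ is of order $|t_k|(\gamma\log\alpha^{-1})^k m^{1-k}$. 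The dominant contribution comes from $k=1$ and gives a limiting value $|t_1|\gamma\log\alpha^{-1}$, while the mirror terms near $k=m-1$ are $O(1/m)$. In the bulk range $k \asymp m$ the Pochhammer ratio $(\gamma)_k(\gamma)_{m-k}/(\gamma)_m$ is controlled by $\binom{m}{k}^{-1}$ up to a polynomial-in-$m$ factor, hence exponentially small. Summing these regimes yields $T_m = O(1)$ uniformly in $m$ (in fact $T_m \to |t_1|\gamma\log\alpha^{-1}$).

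An a priori bound $|d_n| \leq M n|t_n|$ with a universal constant $M$ now follows by induction on $n$: from \eqref{d-Recur},
\begin{equation*}
	|d_m| \leq m|t_m| + M\sum_{k=1}^{m-1}(m-k)|t_{m-k}t_k| = m|t_m|\bigl(1 + M T_m/m\bigr) ,
\end{equation*}
which is at most $Mm|t_m|$ once $m$ exceeds a threshold depending only on the uniform bound $T_m \leq C$; the base case $d_1=-t_1$ and the finitely many remaining values are absorbed by enlarging $M$. Feeding this back into the rewritten recurrence yields $\bigl|\sum_{k=1}^{m-1} d_{m-k} t_k\bigr| \leq M T_m |t_m| = O(|t_m|)$, which is \eqref{d2tratio}. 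The hard part is the uniform bound $T_m = O(1)$: the delicate point is the middle range of $k$ when $\gamma > 1$, where the polynomial prefactor $(m-k)^{\gamma-1}k^{\gamma-1}/m^{\gamma-1}$ in the full asymptotic \eqref{tAsymptotic} can reach size $m^{\gamma-1}$, and one has to verify through Stirling's formula (applied to both the Pochhammer ratio and the binomial coefficient) that it is safely dominated by $\binom{m}{k}^{-1}$.
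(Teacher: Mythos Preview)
Your argument is correct and follows essentially the same route as the paper: rewrite \eqref{d-Recur} in terms of $u_m=d_m/(mt_m)$, and control the convolution via the key estimate that $|t_{m-k}||t_k|/|t_m|$ behaves like a reciprocal binomial coefficient (small away from the endpoints $k=1,m-1$, with the $k=1$ term giving the dominant $O(m^{-1})$ contribution). The only cosmetic difference is that the paper packages the iteration through a discrete Gronwall--Bellman inequality to obtain an explicit product bound on $|u_m|$, whereas you run the same iteration by hand as an induction establishing $|d_n|\le Mn|t_n|$; these are equivalent, and your version is arguably more elementary. The paper also appeals to the two-sided bounds of Proposition~\ref{tLUBounds} rather than the asymptotic \eqref{tLargeOrder} when estimating the ratio, which makes the reciprocal-binomial step uniform without further work, but your Stirling verification for the middle range (including the $\gamma>1$ case you flag) closes the same gap.
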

\begin{proof}
Since $ t_{m} \neq 0 $ we can rewrite the convolution \eqref{d-Recur} as
\begin{equation*}
	\frac{d_{m}}{mt_{m}} = - 1 - \sum_{j=1}^{m-1} \frac{j}{m}\frac{t_{m-j}t_{j}}{t_{m}}\frac{d_{j}}{jt_{j}} ,
\end{equation*}
motivating the definition $ u_{m} := d_{m}/(mt_{m}) $. 
This allows us to deduce the inequality
\begin{equation*}
	|u_{m}|\leq 1 + \sum_{j=1}^{m-1} \frac{j}{m}\frac{|t_{m-j}||t_{j}|}{|t_{m}|}|u_{j}| .
\end{equation*}
This, in turn, facilitates the use of discrete analogues to the Gronwall-Bellman type inequalities \cite{Pac_1973}, \cite[Theorem 1.2.3]{Pachpatte_2002},
and given in their original formulation for the continuous setting \cite{Pea_1886}, \cite{Gro_1919}, \cite{Mik_1935} and \cite{Bel_1943}.
The result is the following explicit inequality for the $ u_{m} $ coefficients
\begin{equation*}
	|u_{m}| \leq 1 + \sum_{j=1}^{m-1} \frac{j}{m}\frac{|t_{m-j}||t_{j}|}{|t_{m}|} \prod_{k=j+1}^{m-1}\left( 1+\frac{k}{m}\frac{|t_{m-k}||t_{k}|}{|t_{m}|} \right) .
\end{equation*}
The relevant controlling aspect of the growth of $ u_{m} $ with $ m \to \infty $ is computed using the bounds \ref{tUpperBound} and \ref{tLowerBound},
\begin{align*}
 	\frac{|t_{m-k}||t_{k}|}{|t_{m}|} & \leq (1-\alpha)^{\gamma}\left( \log\alpha^{-1} \right)^{-\gamma}
 	\frac{(\gamma)_{m-k}(\gamma)_{k}}{(\gamma)_{m}}\left[ 1-\frac{1-\gamma}{m+\gamma-1}\log\alpha^{-1} \right]^{-1} ,
 \\
 	& = (1-\alpha)^{\gamma}\left( \log\alpha^{-1} \right)^{-\gamma} \frac{(\gamma)_{k}}{(m-1+\gamma)\cdots(m-k+\gamma)}\left[ 1+{\rm O}(m^{-1}) \right]
 \\
 	& = {\rm O}(m^{-k}) ,
\end{align*}
when $ k={\rm O}(1) $. As this ratio is symmetrical under $ k\mapsto m-k $ the same conclusion applies when $ k={\rm O}(m) $.
As is visible from the first line displayed above the ratio $|t_{m-k}||t_k|/|t_m|$ is essentially a reciprocal binomial coefficient 
and this decreases rapidly away from the $ k=0,m $ endpoints to a minimum around $ k=m/2 $.
This implies that the inner product is of order $ 1 + {\rm O}(m^{-j-1}) $ with $ j\geq 1 $ and consequently $ |u_{m}| \leq 1 + {\rm O}(m^{-1}) $.
\end{proof}

From the previous result we conclude that 
$ \limsup_{n\geq 1} n^{-1}|d_{n}|^{1/n} = \frac{1}{e\gamma\log\alpha^{-1}} $.
Our final estimates concern the growth of the $ f_{n} $ coefficients, and we observe that this in turn is completely controlled by the growth of the $ d_{n} $ coefficients. 
\begin{proposition}\label{fGrowth}
As $ n\to \infty $ $ f_{n} $ grows as
\begin{equation}
	\frac{nf_{n}}{d_{n}} = 1+{\rm O}(\kappa^n (\log n)^{\log 2}) ,
\label{f2dratio}
\end{equation}
where $ \kappa = 0.7514184\ldots $.
Consequently we have
\begin{equation}
	\lim_{n\to\infty}\frac{f_{n}}{t_{n}} = -1 , \quad \limsup_{n\geq 1} n^{-1}|f_{n}|^{1/n} = \frac{1}{e\gamma\log\alpha^{-1}} .
\label{f2tratio}
\end{equation}
Therefore expansion \eqref{large-j_Fj} of Prop. \ref{ProductFormula} for our particular application is not convergent 
and is in fact asymptotic in the sense of large $\gamma$ and small $\alpha$.
\end{proposition}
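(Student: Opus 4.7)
The plan is to analyze the arithmetic convolution \eqref{fCoeff<-dCoeff},
$$f_n = \frac{d_n}{n} + \sum_{d|n,\, d>1} \frac{1}{d}(-f_{n/d})^d,$$
by leveraging the bounds on $t_n$ from Prop.\ \ref{tLUBounds} and Prop.\ \ref{tUpperBounds} together with the ratio estimate $d_n/(nt_n)\to -1$ from Prop.\ \ref{dGrowth}. The overall strategy is to show that the divisor sum contributes only an exponentially small correction to the leading $d_n/n$ term, so that $f_n$ inherits the sign, magnitude, and growth rate of $-t_n$.

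First I will establish by strong induction on $n$ that there is an absolute constant $K$ with $|f_n|\leq K|t_n|$ for all $n\geq 1$. For the inductive step, starting from the recursion and the bound $|d_n|/n \lesssim |t_n|$, the main task is to control $\sum_{d|n,\, d>1} K^d |t_{n/d}|^d/d$ in terms of $|t_n|$. Using the asymptotic form \eqref{tAsymptotic} and Stirling's formula, the ratio $|t_{n/d}|^d/|t_n|$ decays like $d^{-n}$ up to polynomial-in-$n$ factors; the dominant contribution is the $d=2$ term for even $n$, of size $O(n^{\gamma-1/2}\,2^{-n})$, while odd divisors contribute at least a factor $p^{-n}$ smaller with $p\geq 3$ the smallest prime divisor. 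Choosing $K$ large enough ensures the divisor sum stays bounded by $(K-1)|t_n|/2$ for all sufficiently large $n$, closing the induction.

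With $|f_n|\leq K|t_n|$ in hand, the refined estimate for $nf_n/d_n-1$ follows by substituting this a priori bound into
$$\frac{nf_n}{d_n} - 1 = \frac{n}{d_n} \sum_{d|n,\, d>1} \frac{(-f_{n/d})^d}{d}.$$
The estimate $|d_n|\sim n|t_n|$ and the decay $|t_{n/d}|^d/|t_n| \lesssim d^{-n}\,\mathrm{poly}(n)$ show the sum is dominated by the $d=2$ contribution for even $n$, giving $O(2^{-n}\,\mathrm{poly}(n))$, with still faster decay for odd $n$. Aggregating over the divisor set of $n$ and iterating through the divisor tree yields the stated bound $O(\kappa^n (\log n)^{\log 2})$, the specific numerical $\kappa$ emerging as a conservative constant accommodating the worst-case arithmetic structure uniformly in $n$. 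For the consequences \eqref{f2tratio}, factoring
$$\frac{f_n}{t_n} = \frac{nf_n}{d_n} \cdot \frac{d_n}{nt_n}$$
and combining the just-proved $nf_n/d_n\to 1$ with $d_n/(nt_n)\to -1$ from Prop.\ \ref{dGrowth} gives $f_n/t_n\to -1$. The root-growth identity is then immediate from $|f_n|^{1/n}\sim |t_n|^{1/n}$ and \eqref{tRootGrowth}. Since $|f_n|$ inherits the super-exponential factorial growth of $|t_n|$, the terms $f_m j^{-m}$ in the product of Prop.\ \ref{ProductFormula} do not decay for any fixed $j>0$, so the power product cannot converge and is purely asymptotic.

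The principal technical obstacle, in my view, is obtaining the precise constant $\kappa = 0.7514\ldots$ and the logarithmic exponent $\log 2$ with uniform validity in $n$. A naive term-by-term estimate readily produces a bound $O(n^C 2^{-n})$ along certain arithmetic progressions, which is stronger than the stated form, but extracting a single uniform rate that simultaneously handles primes, prime powers, and highly composite numbers requires carefully tracking the propagation of the inductive hypothesis through the recursive divisor tree. I expect this bookkeeping, rather than any conceptual novelty, to be both the source of the technical difficulty and the origin of the specific value of $\kappa$.
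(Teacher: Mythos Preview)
Your approach differs from the paper's in a significant way. The paper does not first establish an a priori bound $|f_n|\le K|t_n|$ by strong induction; instead it adapts Feld's method directly. For each $n$ it picks the single proper divisor $p_1$ of $n$ contributing the largest term $p_1|f_{p_1}|^{n/p_1}$ to the sum in \eqref{divisorINEQ}, bounds the whole sum by $\tfrac{n}{2}$ times that term, and then recurses: one obtains a chain $n=p_0>p_1>\cdots>p_{m+1}=1$ of dominant divisors with $p_{k+1}\le p_k/2$ and hence length $m+1\le\lceil\log_2 n\rceil$. Introducing a normalised ratio $V_{p_k}\propto p_k|f_{p_k}|/|d_{p_k}|$ and using the $t$-bounds, the chain yields a nested inequality $V_{p_k}\le 1+\tfrac12\,p_{k+1}2^{-p_k}V_{p_{k+1}}^2$, which unwinds from the known base value at $p_{m+1}=1$ to give $V_n\le 1+2^{-2^{m+1}+m-1}x_{m+1}$, where $x_j=1+x_{j-1}^2$, $x_1=1$, is the sequence OEIS \texttt{A003095}. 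The constant you could not locate is exactly $\kappa=\tfrac12 c^2$ with $c=1.2259\ldots$ the growth constant of that quadratic recursion, and the $(\log n)^{\log 2}$ factor comes from the chain length $m\sim\log_2 n$. So the specific $\kappa$ is not a conservative fudge factor but the output of a concrete recursion.

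Your two-stage plan is viable in principle and, if completed, would actually deliver a \emph{sharper} error $O(2^{-n}\operatorname{poly}(n))$ than the paper's $\kappa^n$. But the inductive step has a genuine difficulty you underplay as bookkeeping: in the divisor sum one meets terms $K^{d}|t_{n/d}|^{d}/d$ with $d$ as large as $n$, and ``choosing $K$ large enough'' makes those terms \emph{worse}, so the threshold $N_0(K)$ beyond which the induction closes itself grows with $K$, creating a circularity. To break it one must separate the regime of small $m=n/d$ (where $f_m$ is an explicit number, so no $K$ enters and the term is $|f_m|^{n/m}/|t_n|\sim C^n/n!\to 0$) from large $m$ (where $d=n/m$ is bounded and only $K^{O(1)}$ appears, with the $d=2$ term $K^2|t_{n/2}|^2/|t_n|\sim K^2 2^{-n}\sqrt{n}$ dominating). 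Alternatively one can run a contradiction argument on the first index where $|f_n|/|t_n|$ exceeds $k$. Either way this is a real step, and it is precisely what the Feld divisor-chain recursion sidesteps by always terminating at $p_{m+1}=1$ with $f_1=-t_1$ known exactly, so no a priori bound is ever assumed. The consequences in \eqref{f2tratio} and the non-convergence of \eqref{large-j_Fj} follow from either route exactly as you describe.
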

\begin{proof}
Our strategy is to adapt the method of \cite{Fel_1932} to our situation.
Starting with \eqref{fCoeff<-dCoeff} rephrased slightly differently we deduce the inequality
\begin{equation}
	n|f_{n}| \leq |d_{n}| + \sum_{{p|n \atop 1\leq p < n}}p|f_{p}|^{n/p} ,
\label{divisorINEQ}
\end{equation}
summing over the proper divisors $p$ of $n$.
Let $ p_{1} $ denote the divisor contributing the greatest term to the above sum. 
Then if we use $n/2$ as an upper bound for the number of proper divisors of $n$ we have
\begin{equation}
	\frac{n|f_{n}|}{|d_{n}|} 
	\leq 1 + \tfrac{1}{2}n p_{1} \frac{|f_{p_{1}}|^{n/p_{1}}}{|d_{n}|} 
	= 1 + \tfrac{1}{2}n p_{1}^{1-n/p_{1}}\frac{|d_{p_{1}}|^{n/p_{1}}}{|d_{n}|} \left( \frac{p_{1}|f_{p_{1}}|}{|d_{p_{1}}|} \right)^{n/p_{1}} .
\label{raw_inequality}
\end{equation}
From Prop.\ref{dGrowth} we know
\begin{equation*}
	\frac{|d_{p_{1}}|^{n/p_{1}}}{|d_{n}|} \leq \frac{p_{1}^{n/p_{1}}}{n}\frac{|t_{p_{1}}|^{n/p_{1}}}{|t_{n}|}\left( 1+{\rm O}(n^{-1},p_{1}^{-1}) \right) ,
\end{equation*}
and from Prop.\ref{tLUBounds},
\begin{equation*}
	\frac{|t_{p_{1}}|^{n/p_{1}}}{|t_{n}|} \leq 
	\left[  \sqrt{2\pi}\frac{e^{-\gamma}}{\Gamma(\gamma)}\left(\frac{1-\alpha}{\log\alpha^{-1}}\right)^{\gamma} \right]^{n/p_{1}-1}
	\left[ \frac{(p_{1}+\gamma)^{n/p_{1}}}{n+\gamma} \right]^{\gamma-1/2} 
	\left(\frac{p_{1}+\gamma}{n+\gamma}\right)^{n} .
\end{equation*}
If we make the simplifying definitions
\begin{equation*}
	C = \frac{\sqrt{2\pi}}{\Gamma(\gamma)}\left(\frac{1-\alpha}{\log\alpha^{-1}}\right)^{\gamma} ,
	\qquad
	V_{p_{1}} := C^{1-p_{1}/n} \left[ \frac{p_{1}+\gamma}{(n+\gamma)^{p_{1}/n}}\right]^{\gamma-1/2} \frac{p_{1}|f_{p_{1}|}}{|d_{p_{1}}|} ,
\end{equation*}
we deduce that inequality \eqref{raw_inequality} becomes
\begin{equation*}
	V_{n} \leq 1 + \tfrac{1}{2}p_{1}\left(\frac{p_{1}+\gamma}{n+\gamma}\right)^{n} V_{p_{1}}^{n/p_{1}} .
\end{equation*}
We will now apply this inequality recursively by constructing a sequence of proper divisors $ p_{0}:=n, p_{1}, p_{2}, \dots ,p_{k}, p_{k+1}, \ldots $
where $ p_{k+1} | p_{k} $, $ p_{k+1} < p_{k} $ and $ p_{k+1} $ is the proper divisor with the largest contribution $ p_{k+1}|f_{p_{k+1}}|^{p_{k}/p_{k+1}} $
to the second term on the right-hand side of \eqref{raw_inequality} out of all such proper divisors. 
This sequence will terminate because of the strict inequality applying and all satisfy $ p_{k} \geq 1 $, and we denote the terminal divisor $p_{m+1}=1 $.
However the utility of this construction is that we do not need to identify these divisors because of the simple inequality $ p_{k+1} \leq p_{k}/2 $.
Therefore starting from the top and descending we can generate a sequence of upper bounds $ p_{k} \leq 2^{-k}n $ 
whereas if we start from the terminating divisor and ascend we generate a sequence of lower bounds $ 2^{m+1-k} \leq p_{k} $.
We note that combining these bounds we deduce that $ 2^{m+1} \leq n $ and we find the upper bound $ m+1 \leq \left\lceil \displaystyle\frac{\log n}{\log 2} \right\rceil $.
To further simplify matters we note that
\begin{equation*}
	\left( \frac{p_{k+1}+\gamma}{p_{k}+\gamma} \right)^{p_{k}} \leq e^{\gamma}\:2^{-p_{k}} .
\end{equation*}
To see how this inequality arises take the even case $p_{k}=2l$ first:
\begin{align*}
	p_{k+1}+\gamma	& \leq \tfrac{1}{2}p_{k}+\gamma = \tfrac{1}{2}(p_{k}+2\gamma) ,
\\
	\implies \frac{p_{k+1}+\gamma}{p_{k}+\gamma} & \leq \tfrac{1}{2}\left[ 1+\gamma(p_{k}+\gamma)^{-1} \right]  \leq \tfrac{1}{2}\left[ 1+\gamma p_{k}^{-1} \right] ,
\\
	\implies \left( \frac{p_{k+1}+\gamma}{p_{k}+\gamma} \right)^{p_{k}}	& \leq 2^{-p_{k}}\left[ 1+\gamma p_{k}^{-1} \right]^{p_{k}} \leq 2^{-p_{k}}e^{\gamma} ,
\end{align*}
whereas for the odd case $p_{k}=2l+1$: $p_{k+1} \leq l+1/2$ actually means $p_{k+1} \leq l $ thus 
\begin{align*}
	p_{k+1}+\gamma	& \leq \tfrac{1}{2}p_{k}-\tfrac{1}{2}+\gamma = \tfrac{1}{2}\left( p_{k}+\gamma \right) + \tfrac{1}{2}(\gamma-1),
\\
	\implies \frac{p_{k+1}+\gamma}{p_{k}+\gamma} & \leq \tfrac{1}{2}\left[ 1+(\gamma-1)(p_{k}+\gamma)^{-1} \right]  \leq \tfrac{1}{2}\left[ 1+\gamma p_{k}^{-1} \right] ,
\end{align*}
and the remainder of the steps follows the even case. Thus our final inequality becomes a nested recursion
\begin{equation*}
	V_{p_{k}} \leq 1 + \tfrac{1}{2} p_{k+1} 2^{-p_{k}}V_{p_{k+1}}^2, \quad k=0, 1, \ldots m+1 .
\end{equation*}
Iterating this we find $ V_{n} \leq 1+2^{-g_{0}}x_{m+1} $ where the exponents are $ g_{k}=k+1-m+2^{m+1-k} $ 
and the numerators are generated by the recurrence $ x_{n}=1+x_{n-1}^2 $ with $x_1=1$.
This is the OEIS sequence \cite{OEIS} {\tt A003095}, where $x_{n}$ counts the number of binary trees of height less than or equal to $n$ 
and has the asymptotic growth as $ n\to\infty $ of $ x_{n} \mathop{\sim}\limits_{n \to \infty} c^{2n} $ with $ c=1.2259024435287485386279474959130085213... $.
In conclusion we find exponential convergence of $ V_{n} $ to unity
\begin{equation*}
	V_{n} \leq 1 + 2^{-2^{m+1}+m-1}c^{2m+2} = 1 + \tfrac{1}{2}\left(\tfrac{1}{2}c^2\right)^{n}\left( \tfrac{1}{2}n \right)^{\log 2},
\end{equation*}
and \eqref{f2dratio} follows.
\end{proof}

\section{Gamma Function Products}\label{ProductsofGamma}
\setcounter{equation}{0}

In our final section we employ the product expansion \eqref{large-j_Fj} as an asymptotic expansion in order to derive some leading order approximations for the $\alpha$-Sun density.
At this juncture it is important to clarify a distinction between the true and rigorous convergent nature of the $j$-product in \eqref{MBdensity} and what we are proposing here:
each factor of $F_{j}$ is replaced by an internal product of $m$ factors via \eqref{large-j_Fj} up to a fixed cut-off, 
and the order of products reversed in order to perform the $j$ product exactly, 
so that one has a tractable result for the Mellin transform $ H(s) $ and can compute the inverse Mellin transform explicitly.
We will only carry this out for the first two orders and compare this with the recent related result of Simon \cite{Sim_2023}.

To begin with we recall some classical results concerning evaluations of infinite products of degree-$m$ monomial factors which evaluate as $m$-fold Gamma function products.
\begin{lemma}
Let the $m$-th root of unity be denoted by $ \omega_{m} = e^{2\pi i/m} $. Then for 
$ z \notin \N\; \omega^{-j}_{m} $ with $ j=0,\ldots,m-1 $, $ m\in\N $, $ m\geq 2 $ we have \cite{Hansen_1975}
\begin{equation}
	\prod_{n=1}^{\infty} \frac{n^{m}}{n^{m}-z^{m}} = \prod_{j=0}^{m-1} \Gamma(1-\omega^{j}_{m}z) .
\label{GammaProducts}
\end{equation}
Furthermore let $ |z|< 1 $ \cite{Wan_2020} then the above product can also be evaluated as
\begin{equation}
	\prod_{j=0}^{m-1} \Gamma(1-\omega^{j}_{m}z) = \exp\left( \sum_{k=1}^{\infty}\frac{\zeta(mk)}{k}z^{mk} \right) .
\end{equation}
\end{lemma}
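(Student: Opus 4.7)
The plan is to prove the first identity via Weierstrass's product representation for the Gamma function, exploiting the fact that sums of non-trivial $m$-th roots of unity vanish to cancel the usual exponential convergence factors. The second identity will then follow by taking logarithms and substituting the standard Taylor expansion of $\log\Gamma(1-w)$ about $w=0$.

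For the first identity, I would start from the factorization $n^m - z^m = \prod_{j=0}^{m-1}(n - \omega_m^j z)$, valid because the $\omega_m^j z$ are precisely the $m$-th roots of $z^m$. This gives
\begin{equation*}
   \prod_{n=1}^{\infty}\frac{n^m}{n^m - z^m} = \prod_{n=1}^{\infty}\prod_{j=0}^{m-1}\frac{n}{n - \omega_m^j z}.
\end{equation*}
The Weierstrass canonical product for $\Gamma$ yields
\begin{equation*}
   \Gamma(1-w) = e^{\gamma_E w}\prod_{n=1}^{\infty}\frac{n}{n-w}\,e^{-w/n},
\end{equation*}
where $\gamma_E$ is the Euler--Mascheroni constant. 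Setting $w = \omega_m^j z$ and taking the product over $j=0,\ldots,m-1$:
\begin{equation*}
   \prod_{j=0}^{m-1}\Gamma(1-\omega_m^j z) = \exp\!\left(\gamma_E z\sum_{j=0}^{m-1}\omega_m^j\right)\prod_{n=1}^{\infty}\exp\!\left(-\frac{z}{n}\sum_{j=0}^{m-1}\omega_m^j\right)\prod_{j=0}^{m-1}\frac{n}{n-\omega_m^j z}.
\end{equation*}
The key observation is that $\sum_{j=0}^{m-1}\omega_m^j = 0$ whenever $m\geq 2$, so both exponential factors collapse to $1$, leaving exactly the double product above. The hypothesis $z\notin \N\,\omega_m^{-j}$ guarantees no $n - \omega_m^j z$ factor vanishes, so all terms are well-defined. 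The rearrangement between the $n$- and $j$-products is legitimate because after inserting the regulators the Weierstrass product converges absolutely in $n$, and the $j$-product is finite.

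For the second identity, the plan is to take logarithms of the left-hand side and apply the classical series
\begin{equation*}
   \log\Gamma(1-w) = \gamma_E w + \sum_{k=2}^{\infty}\frac{\zeta(k)}{k}w^k,\qquad |w|<1,
\end{equation*}
which is a standard consequence of integrating the digamma series. With $w = \omega_m^j z$ and summing over $j$, the linear term vanishes again by $\sum_j \omega_m^j = 0$, and the $k$-th order term contributes $\frac{\zeta(k)}{k}z^k \sum_{j=0}^{m-1}\omega_m^{jk}$. The inner sum is $m$ if $m\mid k$ and $0$ otherwise. Writing $k = mn$ with $n\geq 1$ gives
\begin{equation*}
   \sum_{j=0}^{m-1}\log\Gamma(1-\omega_m^j z) = \sum_{n=1}^{\infty}\frac{m\,\zeta(mn)}{mn}z^{mn} = \sum_{n=1}^{\infty}\frac{\zeta(mn)}{n}z^{mn},
\end{equation*}
and exponentiating finishes the proof. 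The restriction $|z|<1$ is exactly what is needed for the $\log\Gamma$ series to converge at each $\omega_m^j z$. I do not anticipate a hard step here; the only care required is in justifying the interchange of the $n$- and $j$-products in the first part, which is straightforward once one notes that the root-of-unity sum kills the non-absolutely-convergent piece of the Weierstrass product.
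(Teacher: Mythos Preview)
Your proof is correct. The paper does not actually supply a proof of this lemma: both identities are simply quoted with citations (Hansen for \eqref{GammaProducts}, Wang for the zeta-series form), and the paper moves directly to the Corollary. The only hint the paper gives about the mechanism is in the proof of that Corollary, where it remarks that \eqref{GammaProducts} is obtained ``by performing a root-of-unity factorisation of each factor which results in a $m$-fold product of polynomials all of whom are linear in $j$, and for each of those linear factors the product over $j$ can be evaluated as a ratio of Gamma functions.'' Your argument makes this precise: you carry out exactly that root-of-unity factorisation $n^m - z^m = \prod_j (n-\omega_m^j z)$ and then use the Weierstrass product to identify the resulting $n$-products with Gamma functions, the key point being that $\sum_{j=0}^{m-1}\omega_m^j = 0$ for $m\geq 2$ kills the exponential regulators so the combined product is genuinely absolutely convergent. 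For the second identity your use of the standard expansion $\log\Gamma(1-w) = \gamma_E w + \sum_{k\geq 2}\zeta(k)w^k/k$ together with the orthogonality $\sum_{j=0}^{m-1}\omega_m^{jk} = m\,\mathbf{1}_{m\mid k}$ is the natural route and is entirely sound; the paper offers no alternative.
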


Our first technical result is the evaluation of infinite products resulting from the truncation of the $m$-expansion in \eqref{large-j_Fj}.
\begin{corollary}
Let $ \omega^{(k+1/2)}_{m} = e^{2\pi i/m\cdot(k+\frac{1}{2})} $ for $m\in \N$, $k\in \Z_{\geq 0}$. 
Furthermore let $t\in \C$ excluding all $1-f_{m}^{1/m}\omega^{(k+1/2)}_{m}$. Then
\begin{multline}
	\prod_{j=1}^{\infty} \frac{1+f_{1} j^{-1}}{1+f_{1} (j-t)^{-1}} \ldots \frac{1+f_{m} j^{-m}}{1+f_{m} (j-t)^{-m}}
\\
	= \frac{\Gamma(1-t+f_{1})}{\Gamma(1-t)\Gamma(1+f_{1})}
	  \cdots
	  \prod_{k=0}^{m-1}\frac{\Gamma(1-t-f_{m}^{1/m}\omega^{(k+1/2)}_{m})}{\Gamma(1-t)\Gamma(1-f_{m}^{1/m}\omega^{(k+1/2)}_{m})}	.
\label{GammaProd}
\end{multline}
\end{corollary}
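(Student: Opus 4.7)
The plan is to interchange the outer $j$-product with the inner finite product indexed by $k\in\{1,\ldots,m\}$, reduce each $k$-factor via a root-of-unity factorisation of $x^k+f_k$, and then evaluate each resulting piece with a Weierstrass-type Gamma product identity.

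First I will verify that the double product can be re-ordered. A direct expansion yields
\begin{equation*}
  \frac{1+f_k j^{-k}}{1+f_k(j-t)^{-k}} = 1 - ktf_k\,j^{-k-1} + \mathrm{O}(j^{-k-2}),\qquad j\to\infty,
\end{equation*}
so for each fixed $k\geq 1$ the $j$-product converges absolutely. Passing to logarithms shows the double series $\sum_{j,k}\log(\cdot)$ is absolutely convergent, which legitimises the swap $\prod_{j}\prod_{k} = \prod_{k}\prod_{j}$.

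Next, for each fixed $k$ I will factorise $x^k+f_k = \prod_{l=0}^{k-1}(x-a_{k,l})$ with $a_{k,l}:=f_k^{1/k}\omega_k^{(l+1/2)}$, the $k$-th roots of $-f_k$. Clearing denominators and redistributing the compensating powers of $j$ and $j-t$ gives
\begin{equation*}
  \frac{1+f_k j^{-k}}{1+f_k(j-t)^{-k}} = \prod_{l=0}^{k-1}\frac{(j-a_{k,l})(j-t)}{(j-t-a_{k,l})\,j}.
\end{equation*}
In each of these $k$ pieces the sum of numerator shifts equals the sum of denominator shifts (both $-t-a_{k,l}$); this balance condition is exactly what makes an infinite product of the form $\prod_{j\geq 1}(j+A)(j+B)/[(j+C)(j+D)]$ with $A+B=C+D$ converge to the classical Gamma ratio $\Gamma(1+C)\Gamma(1+D)/[\Gamma(1+A)\Gamma(1+B)]$, a direct consequence of the Weierstrass product for $1/\Gamma$. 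Applying the identity with $A=-a_{k,l}$, $B=-t$, $C=-t-a_{k,l}$, $D=0$ produces one Gamma triple $\Gamma(1-t-a_{k,l})/[\Gamma(1-a_{k,l})\Gamma(1-t)]$ per $l$. Taking products over $l=0,\ldots,k-1$ and $k=1,\ldots,m$ assembles the full right-hand side of \eqref{GammaProd}; the leading displayed factor corresponds to $k=1$, for which $\omega_1^{(1/2)}=e^{i\pi}=-1$ gives $a_{1,0}=-f_1$ and hence $\Gamma(1-t+f_1)/[\Gamma(1-t)\Gamma(1+f_1)]$.

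The main obstacle is purely one of convergence bookkeeping. The naive sub-products $\prod_j(1+f_k j^{-k})$ already diverge for $k=1$, and even $\prod_j (j^k+f_k)/((j-t)^k+f_k)$ diverges like $\exp(kt\sum 1/j)$ for every $k\geq 1$, so a direct factor-by-factor Gamma evaluation is not available. The factorisation succeeds precisely because the compensating $(j-t)^k/j^k$ factors are redistributed so that the balance $A+B=C+D$ holds inside each of the $k$ linear-ratio pieces, restoring absolute convergence at every stage. The excluded values of $t$ in the hypothesis are then automatically identified as the locations where denominator Gammas on the right-hand side acquire poles (matched by corresponding zeros of factors $1+f_k(j-t)^{-k}$ on the left).
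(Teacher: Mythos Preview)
Your argument is correct and rests on the same root-of-unity factorisation of $x^{k}+f_{k}$ followed by a Gamma-product evaluation that the paper uses. The difference lies in the convergence bookkeeping. For $k\geq 2$ the paper evaluates the numerator product $\prod_{j}(1+f_{k}j^{-k})$ and the denominator product $\prod_{j}(1+f_{k}(j-t)^{-k})$ \emph{separately}: after the linear factorisation each is a finite $J$-product of Gamma ratios, and the large-$J$ divergence cancels because $\sum_{l=0}^{k-1}\omega_{k}^{(l+1/2)}=0$. Only the case $k=1$, where that sum does not vanish, forces the paper to pair numerator and denominator into a single $1+{\rm O}(j^{-2})$ ratio. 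By contrast, you pair numerator and denominator from the outset for every $k$ and redistribute the extra $(j-t)^{k}/j^{k}$ factors so that each of the $k$ linear pieces individually satisfies the balance $A+B=C+D$; this gives a uniform treatment with no case split. The paper's route yields the pleasant identity $\sum_{l}\omega_{k}^{(l+1/2)}=0$ as the reason for convergence when $k\geq 2$, while your route trades that observation for a single Weierstrass-type lemma applied $k$ times.
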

\begin{proof}
We compute the following finite product $J\in \N$ for $m>1$ using \eqref{GammaProducts},
\begin{align*}
	\prod_{j=1}^{J} \left( 1+\frac{f_{m}}{(j-t)^{m}} \right)	
	& = \prod_{k=0}^{m-1}\frac{\Gamma(J+1-t-f_{m}^{1/m}\omega^{(k+1/2)}_{m})}{\Gamma(J+1-t)}
			\frac{\Gamma(1-t)}{\Gamma(1-t-f_{m}^{1/m}\omega^{(k+1/2)}_{m})} ,
\\	
	& \mathop{\to}\limits_{J \to \infty} 
			(J+1)^{-f_{m}^{1/m}\sum_{k=0}^{m-1}\omega^{(k+1/2)}_{m}} \prod_{k=0}^{m-1}\frac{\Gamma(1-t)}{\Gamma(1-t-f_{m}^{1/m}\omega^{(k+1/2)}_{m})} ,
\\
	& = \prod_{k=0}^{m-1}\frac{\Gamma(1-t)}{\Gamma(1-t-f_{m}^{1/m}\omega^{(k+1/2)}_{m})} .
\end{align*}
The first equality above is derived in the same way as the proof of \eqref{GammaProducts}, that is by performing a root-of-unity factorisation of each factor
which results in a $m$-fold product of polynomials all of whom are linear in $j$, and for each of those linear factors the product over $j$ can be evaluated as a ratio of Gamma functions.
The final equality holds because of the sum identity $ \sum_{k=0}^{m-1}\omega^{(k+1/2)}_{m}=0 $.
For $ m=1 $ one can use the fact that only ratios are actually required in $ \prod_{j=1}^{J} \left( 1+\frac{f_{1}}{j} \right)/ \left( 1+\frac{f_{1}}{(j-t)} \right) $ 
and the convergence as $ J\to \infty $ follows as this ratio is $ 1+{\rm O}(j^{-2}) $.  
\end{proof}

Let us first write products and ratios of Gamma functions in the following concise notation: 
for arbitrary complex indeterminates $ a_1, a_2, \ldots $, $ b_1, b_2, \ldots $ subject to $ a_k \neq -\Z_{\geq 0} $,
\begin{equation*}
	\Gamma\left(
		\begin{array}{ccc}
				a_1 & a_2 & \ldots \\
				b_1 & b_2 & \ldots 
		\end{array}\right)
	= \frac{\Gamma(a_1)\Gamma(a_2)\cdots}{\Gamma(b_1)\Gamma(b_2)\cdots} .
\end{equation*}
A similar array notation has been employed for the parameters of the $ {}_PF_{Q} $ hypergeometric functions.
\begin{proposition}\label{density_approx}
Let $ 0\leq \alpha < 1$.
At the first order $m=1$ we compute the Mellin inversion and find a previously given result (see Eq. 3.61 in \cite{WG_2023}),
\begin{equation}
	h(x) \mathop{\sim}\limits_{\gamma \gg 1 \gg \alpha} \frac{\gamma(1-\alpha)^{-\gamma(1-\alpha)^{-1}}}{\Gamma(\frac{1}{1-\alpha})} x^{-1-\gamma/(1-\alpha)} e^{-(1-\alpha)^{-\gamma}x^{-\gamma}} .
\label{1stApproxDensity}
\end{equation}
At the second order we have
\begin{multline}
	h(x) \mathop{\sim}\limits_{\gamma \gg 1 \gg \alpha} \frac{\gamma x^{-1}}{\Gamma\left( \frac{1}{1-\alpha},1+\frac{\alpha^{1/2}\gamma^{-1/2}}{1-\alpha},1-\frac{\alpha^{1/2}\gamma^{-1/2}}{1-\alpha} \right)}
	\Bigg\{
\\
	\Gamma\left(
	\begin{array}{cc}
			\frac{\left[\alpha^{1/2}\gamma^{-1/2}-\alpha\right]}{1-\alpha} & -\frac{\left[\alpha^{1/2}\gamma^{-1/2}+\alpha\right]}{1-\alpha} \\
			-\frac{\alpha}{1-\alpha} & -\frac{\alpha}{1-\alpha}
	\end{array}\right)
	\left[(1-\alpha)x\right]^{-\gamma/(1-\alpha)}
\\ \times
	{}_2F_{2}\left(
		\begin{array}{cc}
				\frac{1}{1-\alpha}, & \frac{1}{1-\alpha} \\
				\frac{\left[1-\alpha^{1/2}\gamma^{-1/2}\right]}{1-\alpha}, & \frac{\left[1+\alpha^{1/2}\gamma^{-1/2}\right]}{1-\alpha}
		\end{array};-(1-\alpha)^{-\gamma}x^{-\gamma} \right)
\\
	+\Gamma\left(
	\begin{array}{cc}
			\frac{\left[-\alpha^{1/2}\gamma^{-1/2}+\alpha\right]}{1-\alpha} & -2\frac{\alpha^{1/2}\gamma^{-1/2}}{1-\alpha} \\
			-\frac{\alpha^{1/2}\gamma^{-1/2}}{1-\alpha} & -\frac{\alpha^{1/2}\gamma^{-1/2}}{1-\alpha}
	\end{array}\right)
	\left[(1-\alpha)x\right]^{-\gamma\left[1+(1-\alpha)^{-1}\alpha^{1/2}\gamma^{-1/2}\right]}
\\ \times
	{}_2F_{2}\left(
		\begin{array}{cc}
				1+\frac{\alpha^{1/2}\gamma^{-1/2}}{1-\alpha}, & 1+\frac{\alpha^{1/2}\gamma^{-1/2}}{1-\alpha} \\
				1-\frac{\left[\alpha-\alpha^{1/2}\gamma^{-1/2}\right]}{1-\alpha}, & 1+2\frac{\alpha^{1/2}\gamma^{-1/2}}{1-\alpha}
		\end{array};-(1-\alpha)^{-\gamma}x^{-\gamma} \right)
\\
	+\Gamma\left(
	\begin{array}{cc}
			\frac{\left[\alpha^{1/2}\gamma^{-1/2}+\alpha\right]}{1-\alpha} & 2\frac{\alpha^{1/2}\gamma^{-1/2}}{1-\alpha} \\
			\frac{\alpha^{1/2}\gamma^{-1/2}}{1-\alpha} & \frac{\alpha^{1/2}\gamma^{-1/2}}{1-\alpha}
	\end{array}\right)
	\left[(1-\alpha)x\right]^{-\gamma\left[1-(1-\alpha)^{-1}\alpha^{1/2}\gamma^{-1/2}\right]}
\\ \times
	{}_2F_{2}\left(
		\begin{array}{cc}
				1-\frac{\alpha^{1/2}\gamma^{-1/2}}{1-\alpha}, & 1-\frac{\alpha^{1/2}\gamma^{-1/2}}{1-\alpha} \\
				1-\frac{\left[\alpha+\alpha^{1/2}\gamma^{-1/2}\right]}{1-\alpha}, & 1-2\frac{\alpha^{1/2}\gamma^{-1/2}}{1-\alpha}
		\end{array};-(1-\alpha)^{-\gamma}x^{-\gamma} \right)
	\Bigg\} .
\label{2ndApproxDensity}
\end{multline}
\end{proposition}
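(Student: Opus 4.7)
The plan is to substitute the $m$-term truncation of Proposition \ref{ProductFormula} into the Mellin--Barnes representation \eqref{MBdensity}, apply the Gamma product identity \eqref{GammaProd} to evaluate the infinite $j$-product in closed form, and recover $h(x)$ by residue evaluation of the resulting meromorphic integrand. First I would replace $F_{j-t}/F_{j}$ by $\prod_{k=1}^{m}(1+f_{k}j^{-k})/(1+f_{k}(j-t)^{-k})$ and exchange the order of the $j$- and $k$-products; Corollary \eqref{GammaProd} then collapses each internal infinite $j$-product into a ratio of $m+1$ Gamma functions obtained by root-of-unity factorisation of the degree-$k$ polynomial factors.

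For $m=1$, substitution of $f_{1}=\alpha/(1-\alpha)$ from \eqref{f1} simplifies the integrand: the factor $\Gamma(1-t)$ in \eqref{MBdensity} cancels the $\Gamma(1-t)$ in the denominator of \eqref{GammaProd}, leaving only $\Gamma(\frac{1}{1-\alpha}-t)/\Gamma(\frac{1}{1-\alpha})$ multiplying $[x(1-\alpha)]^{-\gamma t}$. I would then identify the Mellin variable as $z=[x(1-\alpha)]^{\gamma}$ and apply the standard inverse Mellin identity $\frac{1}{2\pi i}\int z^{-t}\Gamma(a-t)\,dt = z^{-a}e^{-1/z}$ (which follows from the substitution $s=a-t$ in the Mellin inversion of $e^{-y}$) with $a=1/(1-\alpha)$ to obtain \eqref{1stApproxDensity}.

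For $m=2$, I would take $f_{2}=-\alpha/[\gamma(1-\alpha)^{2}]$ from \eqref{f2}. Since $f_{2}<0$, the principal square root gives $f_{2}^{1/2}=i|f_{2}|^{1/2}$, and the second roots of unity $\omega_{2}^{(1/2)}=i$, $\omega_{2}^{(3/2)}=-i$ produce the symmetric shift pair $\pm|f_{2}|^{1/2}=\pm\alpha^{1/2}\gamma^{-1/2}/(1-\alpha)$. The integrand then carries three $t$-dependent Gamma factors, $\Gamma(\frac{1}{1-\alpha}-t)$, $\Gamma(1+|f_{2}|^{1/2}-t)$, and $\Gamma(1-|f_{2}|^{1/2}-t)$, divided by $\Gamma(1-t)^{2}$. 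Closing the contour to the right, I would sum residues at each of the three sequences of simple poles $t=\frac{1}{1-\alpha}+n$, $1+|f_{2}|^{1/2}+n$, and $1-|f_{2}|^{1/2}+n$ for $n\in\Z_{\geq 0}$, using the identity $\Gamma(z-n)=(-1)^{n}\Gamma(z)/(1-z)_{n}$ to rewrite the shifted Gamma evaluations as Pochhammer quotients. Each residue series is then a ${}_{2}F_{2}$ in the variable $-(1-\alpha)^{-\gamma}x^{-\gamma}$, yielding the three corresponding terms of \eqref{2ndApproxDensity}.

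The hard part will be the algebraic bookkeeping required to assemble the three ${}_{2}F_{2}$'s and their Gamma prefactors in the compact form displayed in \eqref{2ndApproxDensity}, namely reading off the upper and lower ${}_{2}F_{2}$ parameters from the Pochhammer ratios while correctly tracking the signs and shifts produced by repeated use of the reflection formula. A subsidiary point is the justification of the contour closure, which is routine: Stirling asymptotics show that the integrand decays exponentially as $\Re(t)\to +\infty$ in bounded horizontal strips, so the contribution from the closing arc vanishes.
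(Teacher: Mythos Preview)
Your proposal is correct and follows essentially the same route as the paper: truncate the product expansion at order $m$, apply the Gamma-product identity \eqref{GammaProd} to collapse the $j$-product, and evaluate the resulting Mellin--Barnes integral by summing residues at the three pole sequences (one from the $m=1$ factor and two from the $m=2$ factor), recognising each residue series as a ${}_2F_2$. Your treatment is in fact slightly more explicit than the paper's in identifying the $\pm|f_2|^{1/2}$ shifts via the half-integer roots of unity and in invoking the Pochhammer reflection identity for the shifted Gamma evaluations; the paper also flags the genericity assumption that the three pole sequences do not overlap, which you may wish to state.
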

\begin{proof}
We give details just for the first order case as this is typical for the general situation. Applying \eqref{GammaProd} to the specific case at hand we find that
\begin{equation*}
	\prod_{j=1}^{\infty}\frac{1+f_1 j^{-1}}{1+f_1 (j-t)^{-1}} = \frac{\Gamma(\frac{1}{1-\alpha}-t)}{\Gamma(1-t)\Gamma(\frac{1}{1-\alpha})} .
\end{equation*}
Replacing the factor $ \prod_{j=1}^{\infty} F_{j-t}/F_{j} $ in \eqref{MBdensity} with the above expression we have the Mellin-Barnes integral
\begin{equation*}
	h(x) \mathop{\sim}\limits_{\gamma \gg 1 \gg \alpha} \frac{\gamma}{2\pi ix}\int_{c-i\infty}^{c+i\infty} dt\; x^{-\gamma t}(1-\alpha)^{-\gamma t}\frac{\Gamma(\frac{1}{1-\alpha}-t)}{\Gamma(\frac{1}{1-\alpha})} ,
\end{equation*}
with $c<1$. We note that the only singularities of the integrand are simple poles at $ t_{l}=l+(1-\alpha)^{-1} $, $ l\geq 0 $.
However because the integrand has an exponential decay as $ t\to c\pm i\infty $ arising from the Gamma function factor $ \Gamma((1-\alpha)^{-1}-t) $
one can fold the upper and lower arms of the contour over the positive-$t$ axis and thus enclosing all of these poles. 
One computes the residues in the standard way \cite{Davies_2002} as
\begin{equation*}
	h(x) \mathop{\sim}\limits_{\gamma \gg 1 \gg \alpha} \frac{\gamma x^{-1}}{\Gamma(\frac{1}{1-\alpha})} \sum_{l=0}^{\infty}\frac{(-1)^l}{l!} \left[(1-\alpha)x\right]^{-\gamma(l+(1-\alpha)^{-1})} ,
\end{equation*}
and the summation is trivial, leading to \eqref{1stApproxDensity}.
To include the second order factors as well one has two new sequences of simple poles at $ t_{l}=l+1\pm (1-\alpha)^{-1}\alpha^{1/2}\gamma^{-1/2} $ in addition to the first order set.
We assume that these three sequences do not overlap, as in the case of generic $\alpha, \gamma$.
After some computation we recognise the series definition of the $ {}_2F_2 $ hypergeometric function, and simplifying we arrive at \eqref{2ndApproxDensity}.
\end{proof}

\begin{corollary}
The small $x\to 0^{+}$ asymptotic form of the second order approximation to the density \eqref{2ndApproxDensity} assumes the simple form 
\begin{equation}
	h(x) \mathop{\sim}\limits_{x \to 0^+} \frac{\gamma(1-\alpha)^{-\gamma/(1-\alpha)}}{\Gamma\left(\frac{1}{1-\alpha}, 1+\frac{\alpha^{1/2}\gamma^{-1/2}}{1-\alpha}, 1-\frac{\alpha^{1/2}\gamma^{-1/2}}{1-\alpha}\right)}
	x^{-1-\gamma/(1-\alpha)} e^{-(1-\alpha)^{-\gamma}x^{-\gamma}} ,
\label{2ndAsympDensity}
\end{equation}
which has precisely the same $x$-dependence as \eqref{1stApproxDensity} and only differs in the constant factor.
\end{corollary}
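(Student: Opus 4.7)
My plan is to bypass the large-argument expansions of the three ${}_{2}F_{2}$ functions in \eqref{2ndApproxDensity} (which individually produce dominant algebraic tails that would need to be shown to cancel) and instead return to the common Mellin--Barnes representation they arose from. Applying \eqref{GammaProd} with $m=2$ to \eqref{MBdensity}, and using $f_{1}=\alpha/(1-\alpha)$ together with $f_{2}^{1/2}\omega_{2}^{(k+1/2)}=\mp\beta$ (where $\beta=\alpha^{1/2}\gamma^{-1/2}/(1-\alpha)$, a consequence of \eqref{f1}--\eqref{f2}), the second-order approximation can be written
\begin{equation*}
h(x)\sim\frac{\gamma}{2\pi i\,x\,\Gamma(A)\Gamma(B)\Gamma(C)}\int_{c-i\infty}^{c+i\infty}\lambda^{t}\,\frac{\Gamma(A-t)\Gamma(B-t)\Gamma(C-t)}{\Gamma^{2}(1-t)}\,dt,
\end{equation*}
with $A=1/(1-\alpha)$, $B=1+\beta$, $C=1-\beta$, $\lambda=[(1-\alpha)x]^{-\gamma}$. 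Closing this contour to the right over the three pole towers $\{A+l\},\{B+l\},\{C+l\}$ reproduces exactly \eqref{2ndApproxDensity}.

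The next step is a steepest-descent evaluation as $\lambda\to\infty$. The saddle $t_{*}$ of $t\log\lambda+\log[\Gamma(A-t)\Gamma(B-t)\Gamma(C-t)/\Gamma^{2}(1-t)]$ satisfies
\begin{equation*}
\psi(A-t)+\psi(B-t)+\psi(C-t)-2\psi(1-t)=\log\lambda,
\end{equation*}
and $\psi(z)\sim\log z$ at infinity yields $t_{*}\sim -\lambda$. The crucial arithmetic fact is the symmetry $B+C=2\cdot 1$: by Stirling,
\begin{equation*}
\frac{\Gamma(B-t)\Gamma(C-t)}{\Gamma^{2}(1-t)}=\frac{\Gamma(1-t+\beta)\Gamma(1-t-\beta)}{\Gamma^{2}(1-t)}=1+O(|t|^{-1})\quad\text{as }|t|\to\infty,
\end{equation*}
so at the saddle the integrand collapses to $\lambda^{t}\Gamma(A-t)\bigl[1+O(\lambda^{-1})\bigr]$. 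The steepest-descent result therefore matches exactly the first-order integral used in Prop.~\ref{density_approx},
\begin{equation*}
(2\pi i)^{-1}\int\lambda^{t}\,\Gamma(A-t)\,dt=\lambda^{A}e^{-\lambda}.
\end{equation*}
Combining with the prefactor $\gamma/[x\,\Gamma(A)\Gamma(B)\Gamma(C)]$ and restoring $\lambda=[(1-\alpha)x]^{-\gamma}$ directly yields \eqref{2ndAsympDensity}.

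The main technical subtlety is that each of the three ${}_{2}F_{2}$ towers in \eqref{2ndApproxDensity} individually contributes polynomial-in-$x^{-\gamma}$ terms which, taken by themselves, are vastly larger than the claimed exponentially small behaviour. The Mellin--Barnes/steepest-descent argument shows automatically that these algebraic tails must cancel upon summation, because the integral as a whole has no such power-law contribution. Making this cancellation explicit from \eqref{2ndApproxDensity} via DLMF~16.11 would require a chain of non-trivial Gamma-function identities parametrised by $\beta$; the route through the integral is both cleaner and generalises in the obvious way to the $m$-th order approximation, where the symmetry of the roots-of-unity shifts around the base Gamma function again ensures that the extra ratio tends to $1$ at the saddle.
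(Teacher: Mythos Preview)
Your argument is correct, but it is not the route the paper takes. The paper works directly with the three ${}_2F_2$ terms in \eqref{2ndApproxDensity} and applies the large-argument expansion \cite[Eq.~16.11.7]{DLMF} to each; the claim is then that all contributions share the same $x$-dependence and that their Gamma prefactors amalgamate into the single constant in \eqref{2ndAsympDensity}. The identities effecting that amalgamation are not written out, and the confluent case $a_1=a_2$ in the first ${}_2F_2$ (which makes the literal formula in the quoted DLMF entry degenerate via a $\Gamma(0)$) is handled implicitly.

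Your route instead recombines the three residue towers back into their common Mellin--Barnes integral and applies saddle-point analysis. The key structural observation you exploit---that $B+C=2$ forces $\Gamma(B-t)\Gamma(C-t)/\Gamma(1-t)^2=1+{\rm O}(|t|^{-1})$---is exactly what makes the second-order saddle coincide with the first-order one, so the exact evaluation $(2\pi i)^{-1}\int\lambda^{t}\Gamma(A-t)\,dt=\lambda^{A}e^{-\lambda}$ carries over to leading order. This explains \emph{why} the $x$-dependence is unchanged rather than verifying it term by term, it sidesteps the confluent-parameter issue entirely, and, as you note, it generalises to arbitrary $m$ because the vanishing of $\sum_{k}\omega_m^{(k+1/2)}$ again forces the extra Gamma ratio to tend to $1$ at the saddle. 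The paper's approach has the virtue of operating on the stated formula \eqref{2ndApproxDensity} itself, so the corollary is literally about that expression; your approach proves the same asymptotic but for the integral from which that expression was derived. Since the two are equal this is not a logical gap, but it is worth saying explicitly.
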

\begin{proof}
Utilising the large argument asymptotic leading order terms for the hypergeometric function, \cite[Eq. 16.11.7]{DLMF} with $\kappa=1$, $\nu=a_1+a_2-b_1-b_2$, 
in \eqref{2ndApproxDensity}
\begin{multline*}
	\frac{\Gamma(a_1)\Gamma(a_2)}{\Gamma(b_1)\Gamma(b_2)} {}_2F_2\left(
			\begin{array}{cc} 
					a_1, & a_2 \\
					b_1, & b_2
			\end{array}; z \right) \mathop{\sim}\limits_{z \to \infty} z^{\nu}e^{z}
\\
			 + \frac{\Gamma(a_1)\Gamma(a_1-a_2)}{\Gamma(b_1-a_1)\Gamma(b_2-a_1)}\left( e^{\pm \pi i}z \right)^{-a_1}
			 + \frac{\Gamma(a_2)\Gamma(a_2-a_1)}{\Gamma(b_1-a_2)\Gamma(b_2-a_2)}\left( e^{\pm \pi i}z \right)^{-a_2} ,
\end{multline*}
we find significant simplification occurs because all three terms end up with the same $x$-dependence and the amalgamation of their coefficients into a single common factor, resulting in \eqref{2ndAsympDensity}.
\end{proof}

In concluding we present clear numerical evidence of our approximations versus the recent result in \cite[Thm. 2]{Sim_2023}, 
which gives the asymptotic form of the density as $x \to 0^+$ in \eqref{h_x2zero} and \eqref{constant}.
Note that \eqref{1stApproxDensity} is a global approximation to the density as the constant factor is its true normalisation 
even though the $x$-dependent part is identical to the true asymptotic form as $ x\to 0^{+} $, as given by the result of Theorem 2 in \cite{Sim_2023}.
Such approximations are then a large $\gamma$ and small $\alpha$ asymptotic development, in the sense of $(1-\alpha)^{-1}$ being of ${\rm O}(1)$.
When such conditions are violated, as is apparent in the plots Fig. \ref{fig:1}-\ref{fig:4}, then the approximation can vanish and change sign. 
Another point to note from Fig. \ref{fig:5} is that the constant $ c(\alpha,\gamma) $ \eqref{constant} diverges to $ 0 $ or $ \infty $ as $ \alpha \to 1^{-} $ when $ \gamma < 1$ or $ \gamma\geq 1$ respectively.  

\begin{figure}[H]
\centering
\begin{minipage}{.5\textwidth}
  \centering
  \includegraphics[width=.9\linewidth]{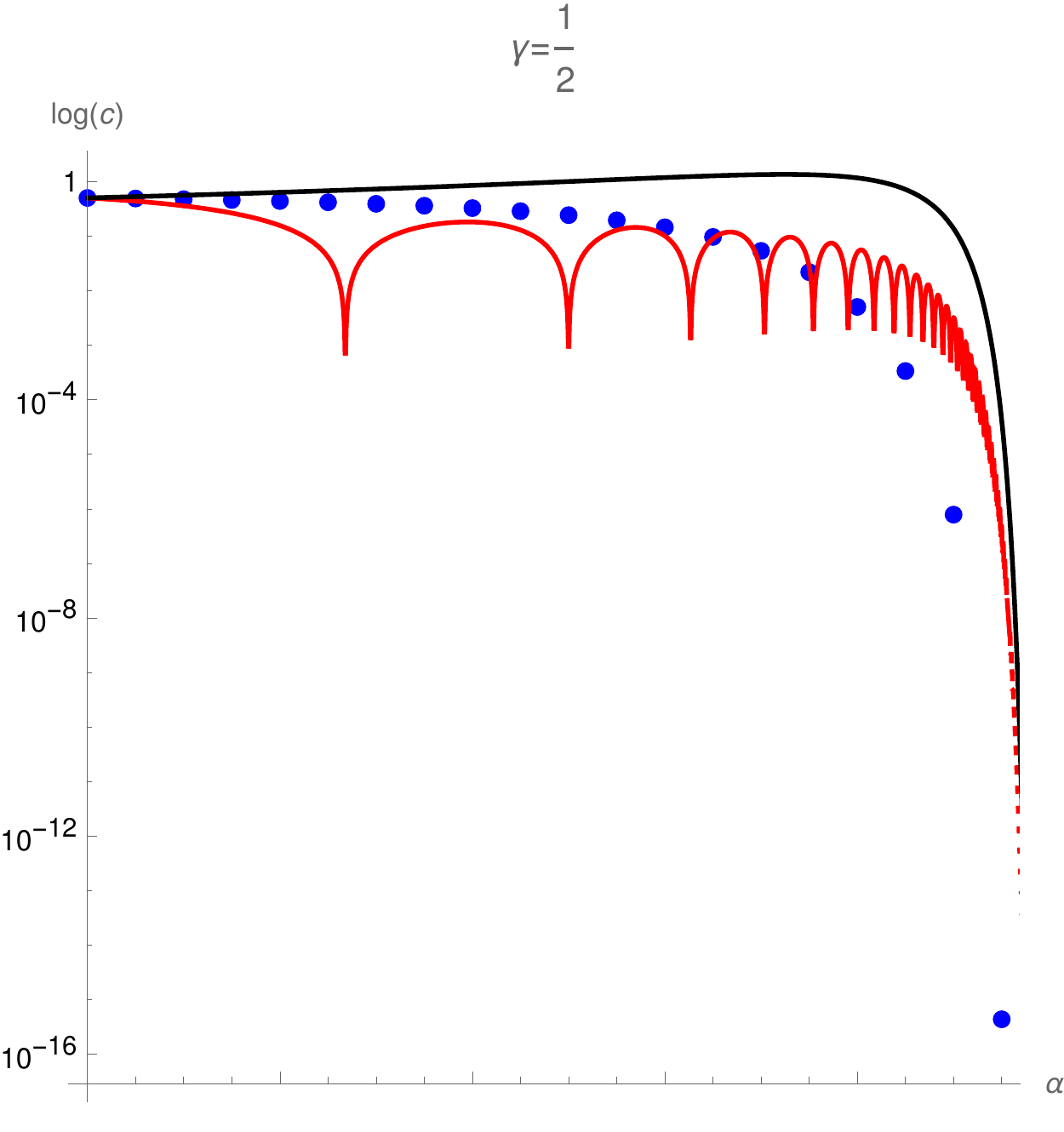}
  \captionof{figure}{Plot of the constant factor in the $x\to 0+$ asymptotic form of $ h(x) $ versus $ \alpha $ for $\gamma=\tfrac{1}{2}$. 
  Blue dots are the exact $c(\alpha,\gamma)$ \eqref{constant}, 
  the black line is the first order approximation \eqref{1stApproxDensity} 
  and the red line is the absolute value of the second order approximation \eqref{2ndAsympDensity}.}
  \label{fig:1}
\end{minipage}%
\begin{minipage}{.5\textwidth}
  \centering
  \includegraphics[width=.9\linewidth]{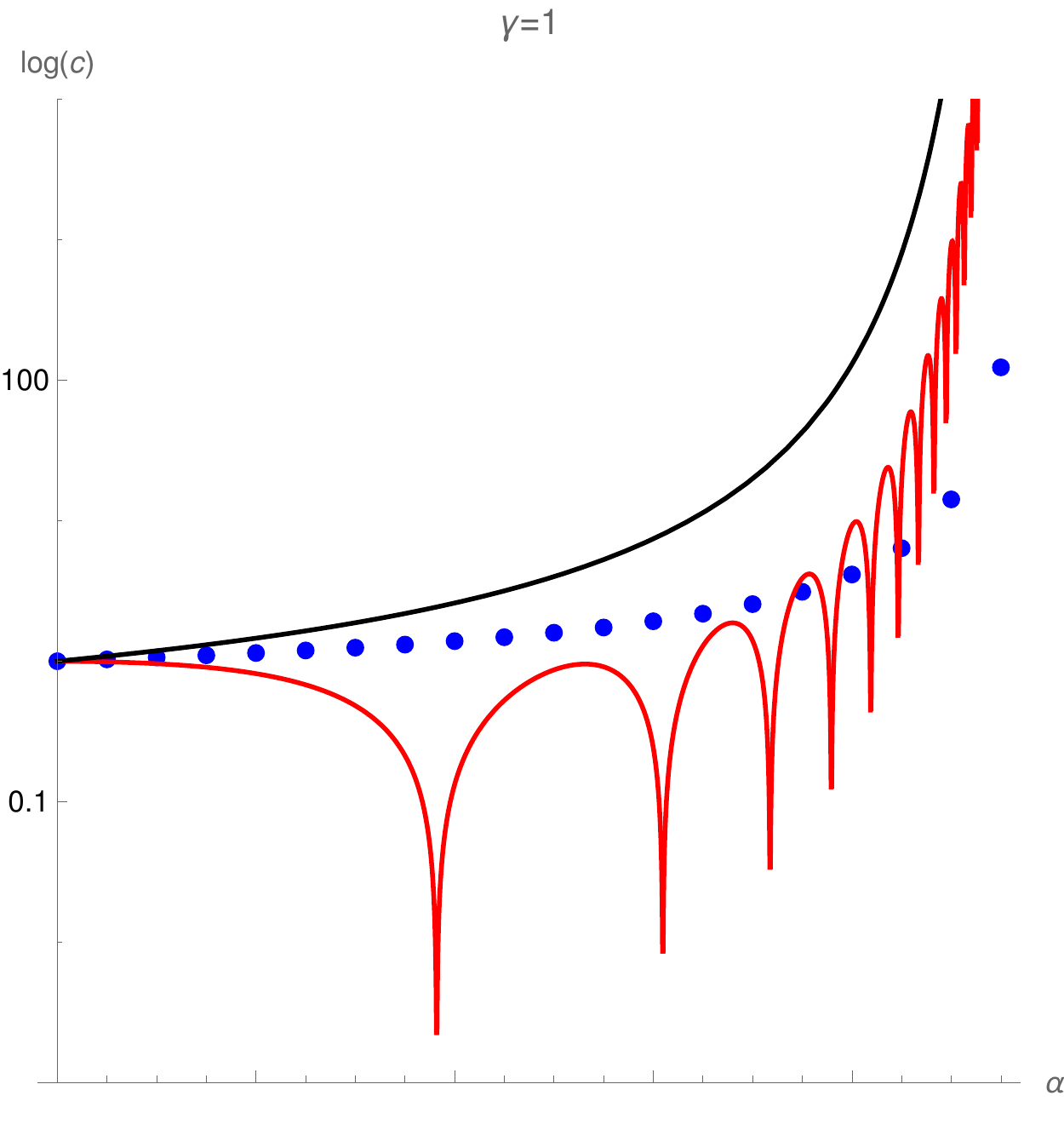}
  \captionof{figure}{As per Fig. \ref{fig:1} for $\gamma=1$.}
  \label{fig:2}
\end{minipage}
\end{figure}

\begin{figure}[H]
\centering
\begin{minipage}{.5\textwidth}
  \centering
  \includegraphics[width=.9\linewidth]{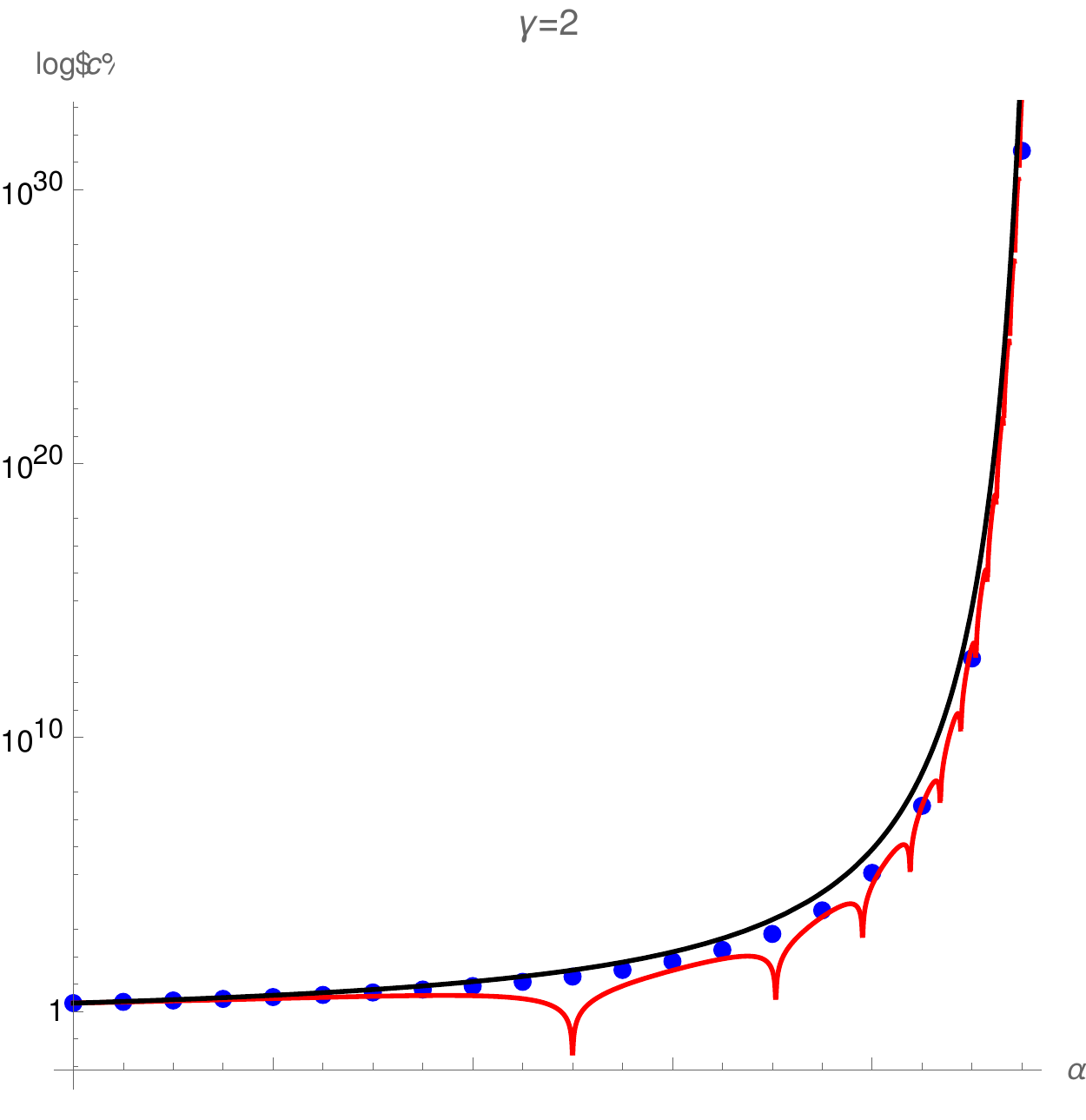}
  \captionof{figure}{As per Fig. \ref{fig:1} for $\gamma=2$.}
  \label{fig:3}
\end{minipage}%
\begin{minipage}{.5\textwidth}
  \centering
  \includegraphics[width=.9\linewidth]{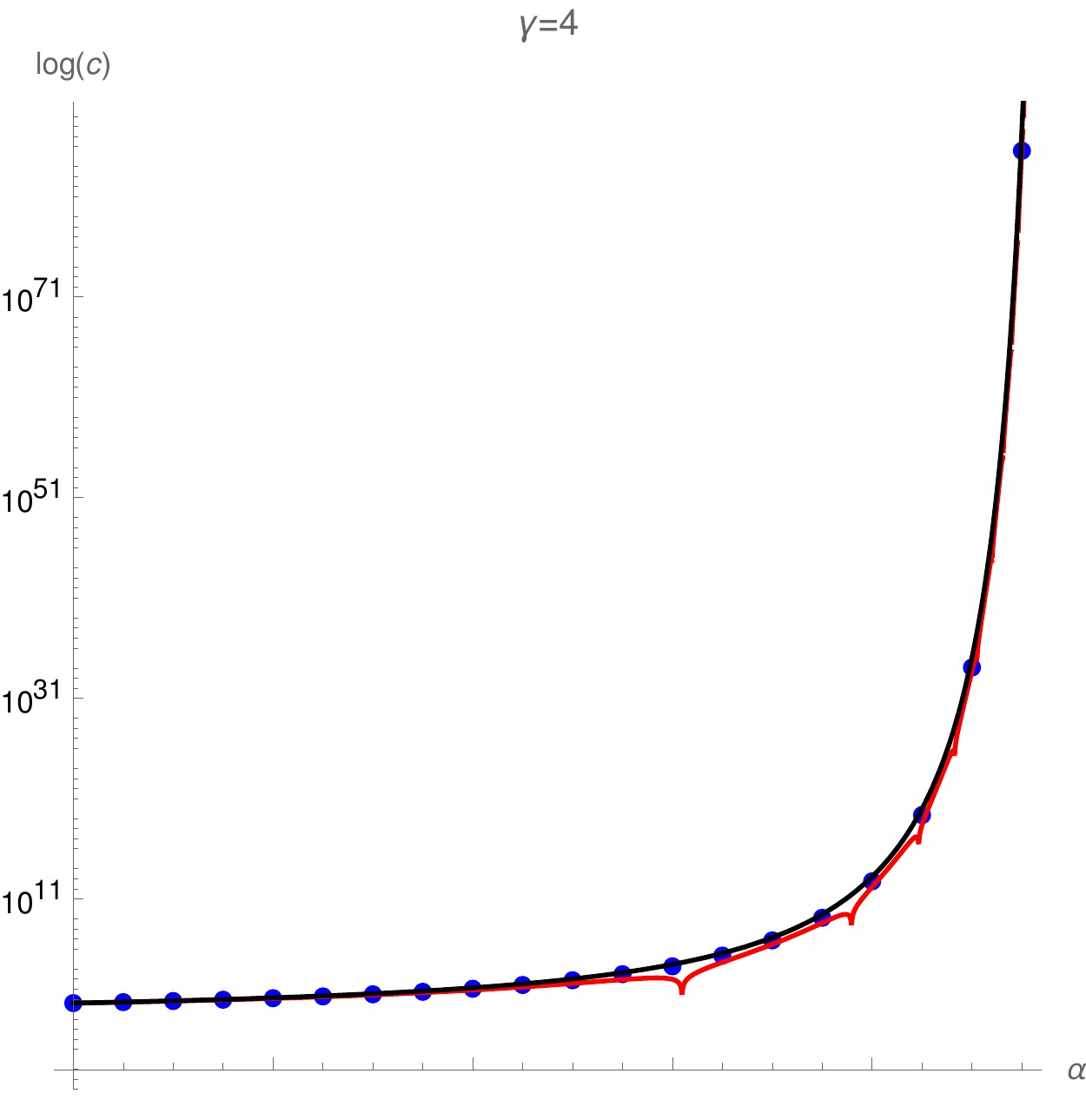}
  \captionof{figure}{As per Fig. \ref{fig:1} for $\gamma=4$.}
  \label{fig:4}
\end{minipage}
\end{figure}

\begin{figure}[H]
\centering
  \includegraphics[width=100mm,height=140mm]{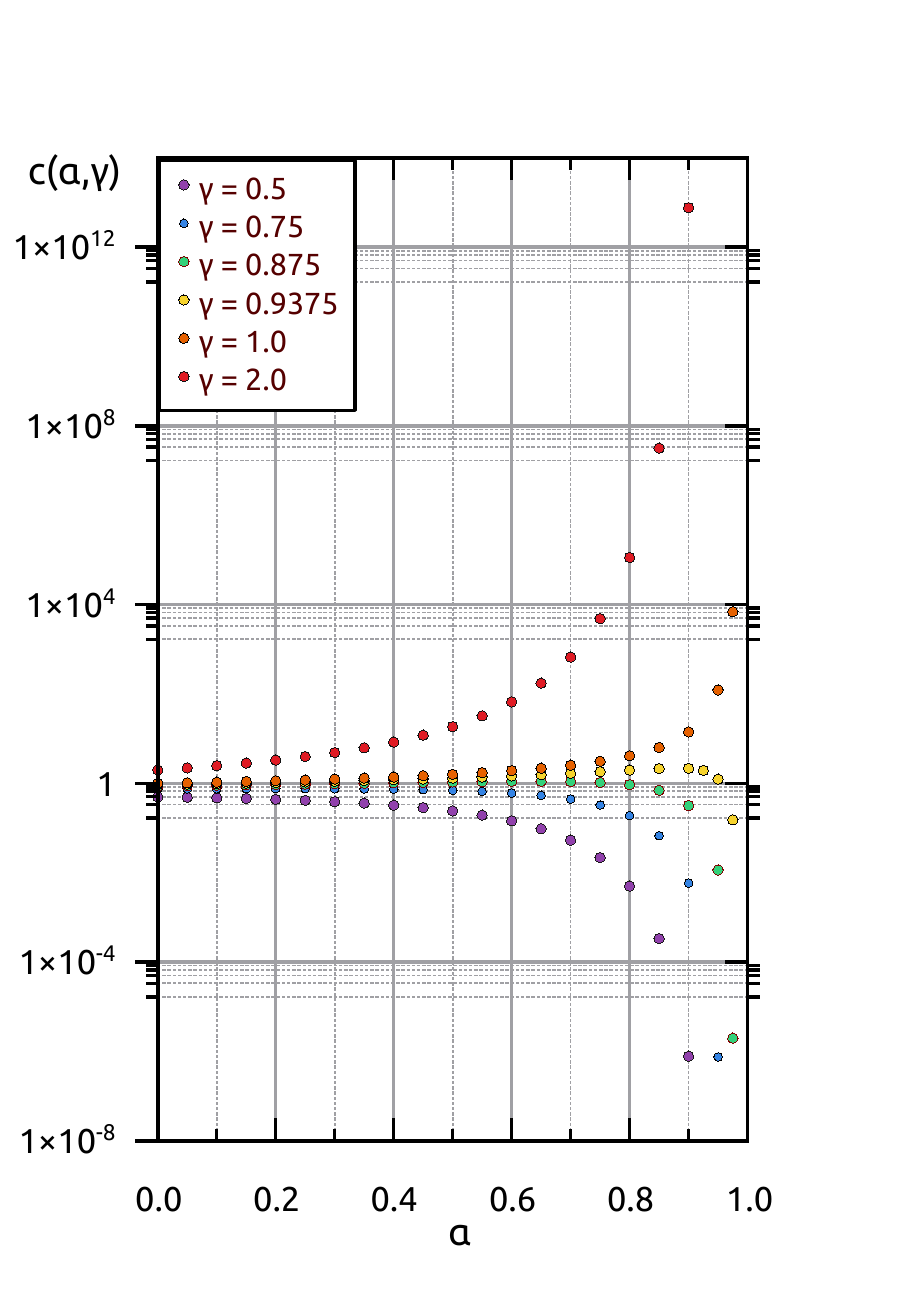}
\caption{Plot of the constant factor \eqref{constant} in the $x\to 0+$ asymptotic form of $ h(x) $ versus $ \alpha\in [0,1) $ for $\gamma=\tfrac{1}{2},\ldots,2$.}
\label{fig:5}       
\end{figure}

\section{Acknowledgements}
The author thanks Howard Cohl for the invitation to present a preliminary version of the current work to the 2022 AMS fall sectional meeting {\it hypergeometric functions, q-series and generalizations} in Salt Lake City, Utah, and thanks Cindy Greenwood and Thomas Simon for much appreciated correspondence.
He would also like express gratitude to the School of Mathematics and Statistics, University of Melbourne, for hosting his visit during the final stages of this study.
Lastly the presentation and clarity of this manuscript has greatly benefited from the attention to detail and thoroughness of the referees.  

\bibliographystyle{plain}

\def\cprime{$'$} \def\cprime{$'$} \def\cprime{$'$}

\end{document}